\def\R		{\mathbb{R}}
\def\d		{\:\mathrm{d}}
\renewcommand{\vec}[1]{\mathbf{#1}}
\newcommand{\clifford}[1]{\ensuremath{C\kern-0.1em\ell_{#1}}}
\def\e		{\vec{e}}
\def\a		{\vec{a}}
\def\b		{\vec{b}}
\def\c		{\vec{c}}
\def\A		{\vec{A}}
\def\B		{\vec{B}}
\def\x		{\vec{x}}
\def\y		{\vec{y}}
\def\u		{\vec{u}}
\def\v		{\vec{v}}
 \newtheorem{thm}{Theorem}[section]
 \newtheorem{cor}[thm]{Corollary}
 \newtheorem{lem}[thm]{Lemma}
 \theoremstyle{definition}
 \newtheorem{defn}[thm]{Definition}
 \theoremstyle{remark}
 \newtheorem{rem}[thm]{Remark}
 \newtheorem*{ex}{Example}
 \numberwithin{equation}{section}
\begin{document}

\title[Total Rotations and Geometric Correlation]{Detection of Total Rotations on 2D-Vector Fields with Geometric Correlation}

\classification{}
\keywords      {geometric algebra, Clifford algebra, registration, total rotation, correlation, iteration.}

\author{Roxana Bujack}
{address={%
Universit\"at Leipzig,
Institut f\"ur Informatik,
Johannisgasse 26,
04103 Leipzig, Germany
}}
\author{Gerik Scheuermann}
{address={%
Universit\"at Leipzig,
Institut f\"ur Informatik,
Johannisgasse 26,
04103 Leipzig, Germany
}}
\author{Eckhard Hitzer}
{address={%
University of Fukui,
Department of Applied Physics,
3-9-1 Bunkyo,
Fukui 910-8507, Japan
}}

\begin{abstract}
Correlation is a common technique for the detection of shifts. Its generalization to the multidimensional geometric correlation in Clifford algebras additionally contains information with respect to rotational misalignment. It has been proven a useful tool for the registration of vector fields that differ by an outer rotation. 
\par
In this paper we proof that applying the geometric correlation iteratively has the potential to detect the total rotational misalignment for linear two-dimensional vector fields. We further analyze its effect on general analytic vector fields and show how the rotation can be calculated from their power series expansions.
\end{abstract}

\maketitle


\section{Introduction}
\cite{LR10} In signal processing correlation is one of the elementary techniques to measure the similarity of two input signals. 
It can be imagined like sliding one signal across the other and multiplying both at every shifted location. The point of registration is the very position, where the normalized cross correlation function takes its maximum, because intuitively explained there the integral is built over squared and therefore purely positive values. For a detailed proof compare \cite{RK82}.
Correlation is widely used for signal analysis, image registration, pattern recognition, and feature extraction \cite{BRO92,ZIT03}. 
\par 
For quite some time the generalization of this method to multivariate data has only been parallel processing of the single channel technique. Multivectors, the elements of geometric or Clifford algebras $\clifford{p,q}$ \cite{C1878,HS84} have a natural geometric interpretation. So the analysis of multidimensional signals expressed as multivector valued functions is a very reasonable approach.
\par
Scheuermann made use of Clifford algebras for vector field analysis in \cite{Sch99}. Together with Ebling \cite{Ebl03,Ebl06} they applied geometric convolution and correlation to develop a pattern matching algorithm. They were able to accelerate it by means of a Clifford Fourier transform and the respective convolution theorem.
\par
At about the same time Moxey, Ell, and Sangwine \cite{MSE02,MSE03} used the geometric properties of quaternions to represent color images,  interpreted as vector fields. They introduced a generalized hypercomplex correlation for quaternion valued functions.
Moxey et. al. state in \cite{MSE03}, that the hypercomplex correlation of translated and outer rotated images will have its maximum peak at the position of the shift and that the correlation at this point also contains information about the outer rotation. From this they were able to approximately correct rotational distortions in color space. 
\par
In \cite{BSH12a} we extended their work and ideas analyzing vector fields with values in the Clifford algebra $\clifford{3,0}$ and their copies produced from outer rotations. We proved that iterative application of the rotation encoded in the cross correlation at the point of registration completely eliminates the outer misalignment of the vector fields.
\par
In this paper we go one step further and analyze if iteration can not only lead to the detection of outer rotations but also to the detection of total rotations of vector fields.
\par
The term rotational misalignment with respect to multivector fields is ambiguous. We distinguish three cases, visualized for a simple example in Figure \ref{f:1}. Let $\operatorname{R} _{\alpha}$ be an operator, that describes a mathematically positive rotation by the angle $\alpha$. 
\par
\begin{figure}[ht]
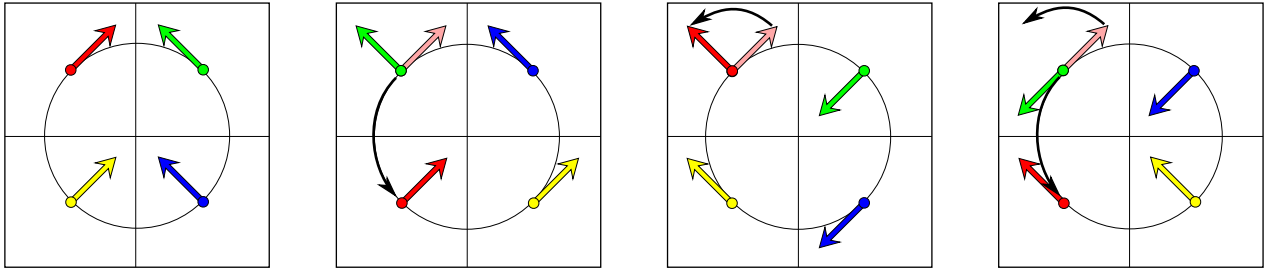
\label{f:1}
\begin{minipage}{0.25\textwidth}
\centering
\psset{unit=1pt}
\begin{pspicture}(102,102)
{
\newrgbcolor{curcolor}{1 1 1}
\pscustom[linestyle=none,fillstyle=solid,fillcolor=curcolor]
{
\newpath
\moveto(1,101)
\lineto(101,101)
\lineto(101,1)
\lineto(1,1)
\lineto(1,101)
\closepath
}
}
{
\newrgbcolor{curcolor}{0 0 0}
\pscustom[linewidth=0.5,linecolor=curcolor]
{
\newpath
\moveto(1,101)
\lineto(101,101)
\lineto(101,1)
\lineto(1,1)
\lineto(1,101)
\closepath
}
}
{
\newrgbcolor{curcolor}{1 1 1}
\pscustom[linestyle=none,fillstyle=solid,fillcolor=curcolor]
{
\newpath
\moveto(86.0166576,50.76017173)
\curveto(86.0166576,31.4302046)(70.3466235,15.76017013)(51.01665683,15.76017013)
\curveto(31.68669015,15.76017013)(16.01665605,31.4302046)(16.01665605,50.76017173)
\curveto(16.01665605,70.09013886)(31.68669015,85.76017333)(51.01665683,85.76017333)
\curveto(70.3466235,85.76017333)(86.0166576,70.09013886)(86.0166576,50.76017173)
\closepath
}
}
{
\newrgbcolor{curcolor}{0 0 0}
\pscustom[linewidth=0.09999999,linecolor=curcolor]
{
\newpath
\moveto(86.0166576,50.76017173)
\curveto(86.0166576,31.4302046)(70.3466235,15.76017013)(51.01665683,15.76017013)
\curveto(31.68669015,15.76017013)(16.01665605,31.4302046)(16.01665605,50.76017173)
\curveto(16.01665605,70.09013886)(31.68669015,85.76017333)(51.01665683,85.76017333)
\curveto(70.3466235,85.76017333)(86.0166576,70.09013886)(86.0166576,50.76017173)
\closepath
}
}
{
\newrgbcolor{curcolor}{1 0 0}
\pscustom[linestyle=none,fillstyle=solid,fillcolor=curcolor]
{
\newpath
\moveto(24.50853795,75.67407342)
\lineto(38.65067355,89.81620902)
\lineto(35.11513965,90.5233158)
\lineto(42.89331423,92.64463614)
\lineto(40.77199389,84.86646156)
\lineto(40.06488711,88.40199546)
\lineto(25.92275151,74.25985986)
\lineto(24.50853795,75.67407342)
\closepath
}
}
{
\newrgbcolor{curcolor}{0 0 0}
\pscustom[linewidth=0.1,linecolor=curcolor]
{
\newpath
\moveto(24.50853795,75.67407342)
\lineto(38.65067355,89.81620902)
\lineto(35.11513965,90.5233158)
\lineto(42.89331423,92.64463614)
\lineto(40.77199389,84.86646156)
\lineto(40.06488711,88.40199546)
\lineto(25.92275151,74.25985986)
\lineto(24.50853795,75.67407342)
\closepath
}
}
{
\newrgbcolor{curcolor}{1 0 0}
\pscustom[linestyle=none,fillstyle=solid,fillcolor=curcolor]
{
\newpath
\moveto(27.33696695,74.25986894)
\curveto(26.55591481,73.4788168)(25.28958199,73.47881391)(24.50853343,74.25986247)
\curveto(23.72748486,75.04091103)(23.72748776,76.30724385)(24.5085399,77.08829599)
\curveto(25.28959203,77.86934813)(26.55592485,77.86935102)(27.33697342,77.08830246)
\curveto(28.11802198,76.3072539)(28.11801908,75.04092108)(27.33696695,74.25986894)
\closepath
}
}
{
\newrgbcolor{curcolor}{0 0 0}
\pscustom[linewidth=0.1,linecolor=curcolor]
{
\newpath
\moveto(27.33696695,74.25986894)
\curveto(26.55591481,73.4788168)(25.28958199,73.47881391)(24.50853343,74.25986247)
\curveto(23.72748486,75.04091103)(23.72748776,76.30724385)(24.5085399,77.08829599)
\curveto(25.28959203,77.86934813)(26.55592485,77.86935102)(27.33697342,77.08830246)
\curveto(28.11802198,76.3072539)(28.11801908,75.04092108)(27.33696695,74.25986894)
\closepath
}
}
{
\newrgbcolor{curcolor}{0 0 0}
\pscustom[linewidth=0.1,linecolor=curcolor]
{
\newpath
\moveto(1,50.5)
\lineto(101,50.5)
}
}
{
\newrgbcolor{curcolor}{0 0 0}
\pscustom[linewidth=0.2,linecolor=curcolor]
{
\newpath
\moveto(50.5,101)
\lineto(50.5,1)
}
}
{
\newrgbcolor{curcolor}{0 1 0}
\pscustom[linestyle=none,fillstyle=solid,fillcolor=curcolor]
{
\newpath
\moveto(76.0166592,74.34596057)
\lineto(61.8745236,88.48809617)
\lineto(61.16741682,84.95256227)
\lineto(59.04609648,92.73073685)
\lineto(66.82427106,90.60941651)
\lineto(63.28873716,89.90230973)
\lineto(77.43087276,75.76017413)
\lineto(76.0166592,74.34596057)
\closepath
}
}
{
\newrgbcolor{curcolor}{0 0 0}
\pscustom[linewidth=0.1,linecolor=curcolor]
{
\newpath
\moveto(76.0166592,74.34596057)
\lineto(61.8745236,88.48809617)
\lineto(61.16741682,84.95256227)
\lineto(59.04609648,92.73073685)
\lineto(66.82427106,90.60941651)
\lineto(63.28873716,89.90230973)
\lineto(77.43087276,75.76017413)
\lineto(76.0166592,74.34596057)
\closepath
}
}
{
\newrgbcolor{curcolor}{0 1 0}
\pscustom[linestyle=none,fillstyle=solid,fillcolor=curcolor]
{
\newpath
\moveto(77.43086368,77.17438956)
\curveto(78.21191581,76.39333743)(78.21191871,75.12700461)(77.43087015,74.34595604)
\curveto(76.64982158,73.56490748)(75.38348876,73.56491038)(74.60243663,74.34596251)
\curveto(73.82138449,75.12701465)(73.8213816,76.39334747)(74.60243016,77.17439603)
\curveto(75.38347872,77.9554446)(76.64981154,77.9554417)(77.43086368,77.17438956)
\closepath
}
}
{
\newrgbcolor{curcolor}{0 0 0}
\pscustom[linewidth=0.1,linecolor=curcolor]
{
\newpath
\moveto(77.43086368,77.17438956)
\curveto(78.21191581,76.39333743)(78.21191871,75.12700461)(77.43087015,74.34595604)
\curveto(76.64982158,73.56490748)(75.38348876,73.56491038)(74.60243663,74.34596251)
\curveto(73.82138449,75.12701465)(73.8213816,76.39334747)(74.60243016,77.17439603)
\curveto(75.38347872,77.9554446)(76.64981154,77.9554417)(77.43086368,77.17438956)
\closepath
}
}
{
\newrgbcolor{curcolor}{1 1 0}
\pscustom[linestyle=none,fillstyle=solid,fillcolor=curcolor]
{
\newpath
\moveto(24.60244795,25.63517342)
\lineto(38.74458355,39.77730902)
\lineto(35.20904965,40.4844158)
\lineto(42.98722423,42.60573614)
\lineto(40.86590389,34.82756156)
\lineto(40.15879711,38.36309546)
\lineto(26.01666151,24.22095986)
\lineto(24.60244795,25.63517342)
\closepath
}
}
{
\newrgbcolor{curcolor}{0 0 0}
\pscustom[linewidth=0.1,linecolor=curcolor]
{
\newpath
\moveto(24.60244795,25.63517342)
\lineto(38.74458355,39.77730902)
\lineto(35.20904965,40.4844158)
\lineto(42.98722423,42.60573614)
\lineto(40.86590389,34.82756156)
\lineto(40.15879711,38.36309546)
\lineto(26.01666151,24.22095986)
\lineto(24.60244795,25.63517342)
\closepath
}
}
{
\newrgbcolor{curcolor}{1 1 0}
\pscustom[linestyle=none,fillstyle=solid,fillcolor=curcolor]
{
\newpath
\moveto(27.43087695,24.22096894)
\curveto(26.64982481,23.4399168)(25.38349199,23.43991391)(24.60244343,24.22096247)
\curveto(23.82139486,25.00201103)(23.82139776,26.26834385)(24.6024499,27.04939599)
\curveto(25.38350203,27.83044813)(26.64983485,27.83045102)(27.43088342,27.04940246)
\curveto(28.21193198,26.2683539)(28.21192908,25.00202108)(27.43087695,24.22096894)
\closepath
}
}
{
\newrgbcolor{curcolor}{0 0 0}
\pscustom[linewidth=0.1,linecolor=curcolor]
{
\newpath
\moveto(27.43087695,24.22096894)
\curveto(26.64982481,23.4399168)(25.38349199,23.43991391)(24.60244343,24.22096247)
\curveto(23.82139486,25.00201103)(23.82139776,26.26834385)(24.6024499,27.04939599)
\curveto(25.38350203,27.83044813)(26.64983485,27.83045102)(27.43088342,27.04940246)
\curveto(28.21193198,26.2683539)(28.21192908,25.00202108)(27.43087695,24.22096894)
\closepath
}
}
{
\newrgbcolor{curcolor}{0 0 1}
\pscustom[linestyle=none,fillstyle=solid,fillcolor=curcolor]
{
\newpath
\moveto(76.0166592,24.34596057)
\lineto(61.8745236,38.48809617)
\lineto(61.16741682,34.95256227)
\lineto(59.04609648,42.73073685)
\lineto(66.82427106,40.60941651)
\lineto(63.28873716,39.90230973)
\lineto(77.43087276,25.76017413)
\lineto(76.0166592,24.34596057)
\closepath
}
}
{
\newrgbcolor{curcolor}{0 0 0}
\pscustom[linewidth=0.1,linecolor=curcolor]
{
\newpath
\moveto(76.0166592,24.34596057)
\lineto(61.8745236,38.48809617)
\lineto(61.16741682,34.95256227)
\lineto(59.04609648,42.73073685)
\lineto(66.82427106,40.60941651)
\lineto(63.28873716,39.90230973)
\lineto(77.43087276,25.76017413)
\lineto(76.0166592,24.34596057)
\closepath
}
}
{
\newrgbcolor{curcolor}{0 0 1}
\pscustom[linestyle=none,fillstyle=solid,fillcolor=curcolor]
{
\newpath
\moveto(77.43086368,27.17438956)
\curveto(78.21191581,26.39333743)(78.21191871,25.12700461)(77.43087015,24.34595604)
\curveto(76.64982158,23.56490748)(75.38348876,23.56491038)(74.60243663,24.34596251)
\curveto(73.82138449,25.12701465)(73.8213816,26.39334747)(74.60243016,27.17439603)
\curveto(75.38347872,27.9554446)(76.64981154,27.9554417)(77.43086368,27.17438956)
\closepath
}
}
{
\newrgbcolor{curcolor}{0 0 0}
\pscustom[linewidth=0.1,linecolor=curcolor]
{
\newpath
\moveto(77.43086368,27.17438956)
\curveto(78.21191581,26.39333743)(78.21191871,25.12700461)(77.43087015,24.34595604)
\curveto(76.64982158,23.56490748)(75.38348876,23.56491038)(74.60243663,24.34596251)
\curveto(73.82138449,25.12701465)(73.8213816,26.39334747)(74.60243016,27.17439603)
\curveto(75.38347872,27.9554446)(76.64981154,27.9554417)(77.43086368,27.17438956)
\closepath
}
}
\end{pspicture}
\end{minipage}
\hspace{0.1cm}
\begin{minipage}{0.25\textwidth}
\centering
\psset{unit=1pt}
  \input{vf_inner}
\end{minipage}
\hspace{0.1cm}
\begin{minipage}{0.25\textwidth}
\centering
\psset{unit=1pt}
  \input{vf_outer}
\end{minipage}
\hspace{0.1cm}
\begin{minipage}{0.25\textwidth}
\centering
\psset{unit=1pt}
  \input{vf_total}
\end{minipage}
\caption{From left to right: a vector field, its copy from inner rotation, outer rotation, total rotation.}
\end{figure}
Two multivector fields $\A(\x),\B(\x):\R^m\to\clifford{p,q}$ differ by an \textbf{inner rotation} if they suffice 
\begin{equation}
\begin{aligned}
\A(\x)=\B(\operatorname{R} _{-\alpha}(\x)).
\end{aligned}
\end{equation}
It can be interpreted like the starting position of every vector is rotated by $\alpha$. Then the old vector is reattached at the new position, but it still points into the old direction. The inner rotation is suitable to describe the rotation of a color image. The color is represented as a vector and does not change when the picture is turned. 
\par
Another kind of misalignment we want to mention is the \textbf{outer rotation}
\begin{equation}
\begin{aligned}
 \A(\x)=\operatorname{R} _{\alpha}(\B(\x)).
\end{aligned}
\end{equation}
Here every vector on the vector field $\A$ is the rotated copy of every vector in the vector field  $\B$. The vectors are rotated independently from their positions. This kind of rotation appears for example in color images, when the color space is turned but the picture is not moved, compare \cite{MSE03}. 
\par
The third and in this paper most relevant kind is the \textbf{total rotation}
\begin{equation}
\begin{aligned}
 \A(\x)=\operatorname{R} _{\alpha}(\B(\operatorname{R} _{-\alpha}(\x))).
\end{aligned}
\end{equation}
The positions and the multivectors are stiffly connected during this kind of rotation. If domain and codomain are of equal dimension it can be interpreted as a coordinate transform, as looking at the multivector field from another point of view. A total rotation is the most intuitive of the misalignments, it occurs in physical vector fields like for example fluid mechanics, and aerodynamics.
\par
With respect to the definition of the correlation there are different formulae in current literature, \cite{Ebl06,MSE03}. We prefer the following one because it satisfies a geometric generalization of the Wiener-Khinchin theorem and because it coincides with the definition of the standard cross-correlation for complex functions in the special case of $\clifford{0,1}$. 
%
\begin{defn}
 The \textbf{geometric cross correlation} of two multivector valued functions $\A(\x),\B(\x):\R^m\to\clifford{p,q}$ is a multivector valued function defined by
\begin{equation}
\begin{aligned}
(\A\star \B)(\x):=&\int_{\R^m}\overline{\A(\y)}\B(\y+\x)\d^m\y,
\end{aligned}
\end{equation} 
where $\overline{\A(\y)}=\sum\limits_{k=0}^n(-1)^{\frac12k(k-1)}\langle \A(\y)\rangle_k$ is the reversion.
\end{defn}
\begin{rem}
 To simplify notation we will make some conventions. Without loss of generality we assume the integrable vector fields to be normalized with respect to the $L^2$-norm. That way the normalized cross correlation coincides with its unnormalized counterpart. We will also just analyze the correlation at the origin. Since our vector fields are not shifted, the origin of coordinates is the place of the translational registration. If the vector fields should also differ by an inner shift, our methods can be applied analogously to this location.
\end{rem}

\section{Motivation}
The fundamental idea for this paper stems from the correlation of a two-dimensional vector field and its copy from outer rotation
\begin{equation}
\begin{aligned}\label{outerprod}
(\operatorname{R} _{\alpha}(\v)\star \v)(0)
&=\int_{\R^2}\overline{\operatorname{R} _{\alpha}(\v(\x))}\v(\x)\d^2\x
\\&=\int_{\R^2}\overline{e^{-\alpha \e_{12}}\v(\x)}\v(\x)\d^2\x 
\\&=||\v(\x)||_{L^2}^2e^{-\alpha \e_{12}}.
\end{aligned} 
\end{equation}
Since $||\v(\x)||_{L^2}^2\in\R$ the alignment can be restored by rotating back $\operatorname{R} _{\alpha}(\v)$ by the angle encoded in the argument.
\par
We want to develop this idea further to analyze total rotations. In $\clifford{2,0}$ they take the shape
\begin{equation}
\begin{aligned}\label{total}
\u(\x)=\operatorname{R} _{\alpha}(\v(\operatorname{R} _{-\alpha}(\x)))=e^{-\alpha \e_{12}}\v(e^{\alpha \e_{12}}\x),
\end{aligned} 
\end{equation}
so it is not possible to predict the rotation that is encoded in the geometric correlation
without knowing the shape of $\v$. 
\par
Vector fields that depend only on the magnitude of $\x$ are  invariant with respect to inner rotations. It is easy to see that in this case the correlation takes the same shape as in (\ref{outerprod}) and that the misalignment can be corrected applying a rotation by the angle in the argument, too. But in general the vector fields and the rotor can not be separated from the integral of the correlation
\begin{equation}
\begin{aligned}
(\u\star \v)(0)
&=\int_{\R^m}\overline{\operatorname{R} _{\alpha}(\v(\x))}\v(\x)\d^m\x
\\&=e^{-\alpha \e_{12}}\int_{\R^m}\v(e^{\alpha \e_{12}}\x)\v(\x)\d^m\x.
\end{aligned} 
\end{equation}
We dealt with a similar problem in \cite{BSH12a} when we treated the three-dimensional outer rotation. 
For this case we could prove that the encoded rotation is at least a fair approximation to the one sought after and that iterative application leads to the detection of the misalignment sought after. 
\par
Trying to adapt this idea to total rotations we discovered that this result does not apply to all two-dimensional vector fields, compare the following counterexample.

\begin{figure}[ht]
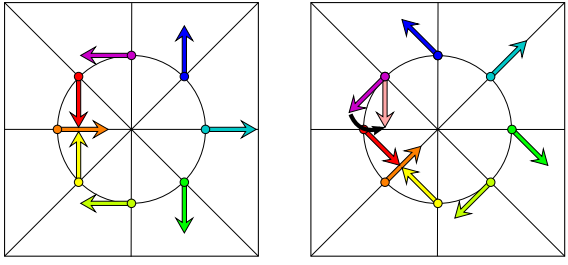
\label{f:gegenbsp}
\begin{minipage}{0.23\textwidth}
\centering
\psset{unit=0.8pt}
  \input{gegenbsp}
\end{minipage}
\hspace{0.1cm}
\begin{minipage}{0.23\textwidth}
\centering
\psset{unit=0.8pt}
  \input{gegenbsp0,25pi}
\end{minipage}
\caption{Left: vector field from the counter example. Right: math. pos. rotated copy by $\frac\pi4$. At each position the same rotor, depicted as black arrow, is contributed to the correlation.}
\end{figure}
\begin{ex}
Let $\v:B_1(0)\to\R^2$ be the vector field from Figure \ref{f:gegenbsp} vanishing outside the unit circle take the shape 
\begin{equation}
\begin{aligned}
\v(r,\varphi)=&\e_1e^{2\varphi \e_{12}}
\end{aligned}
\end{equation}
expressed in polar coordinates. Then its rotated copy suffices
\begin{equation}
\begin{aligned}
\u(r,\varphi)=&\e_1e^{(2\varphi-\alpha) \e_{12}}
\end{aligned}
\end{equation}
inside the unit circle and the correlation of the two is
\begin{equation}
\begin{aligned}
(\u(\x)\star\v(\x))(0)
=&\int_{B_1(0)}\e_1e^{(2\varphi-\alpha) \e_{12}}\e_1e^{2\varphi \e_{12}}r\d r\d\varphi
\\=&\int_{B_1(0)}\e_1\e_1e^{-(2\varphi-\alpha) \e_{12}}e^{2\varphi \e_{12}}r\d r\d\varphi
\\=&\int_{B_1(0)}e^{\alpha \e_{12}}r\d r\d\varphi
\\=&\pi e^{\alpha \e_{12}}.
\end{aligned}
\end{equation}
If we want to correct the misalignment by rotating back with its inverse like in (\ref{outerprod}) we would rotate in the completely wrong direction and double the misalignment with each step, because no matter how the rotational misalignment was, we always detect its negative. So imagine starting the iterative algorithm from \cite{BSH12a} with $\alpha=\frac{2\pi}3$. It would become periodic $\frac{2\pi}3,\frac{4\pi}3,\frac{8\pi}3=\frac{2\pi}3,...$ and not converge at all.
\end{ex}
But the idea applies to all linear fields. We will show in the next sections, that iteratively rotating back with the inverse of the normalized geometric correlation will detect the correct misalignment of any two-dimensional linear vector field and its copy from total rotation.
\section{Easy Linear Examples}
Assume a linear vector field in two dimensions
\begin{equation}
\begin{aligned}
\v(\x)=(a_{11}x_1+a_{12}x_2)\e_1+(a_{21}x_1+a_{22}x_2)\e_2
\end{aligned} 
\end{equation}
with real coefficients.  
Before analyzing the general linear case, let us look the examples in Figure \ref{f:2}, the saddles 
\begin{equation}
\begin{aligned}
\vec a(\x)=&x_1\e_1-x_2\e_2,\\
\vec b(\x)=&x_2\e_1+x_1\e_2,\\
\end{aligned} 
\end{equation}
the source 
\begin{equation}
\begin{aligned}
\vec c(\x)=&x_1\e_1+x_2\e_2,
\end{aligned} 
\end{equation}
and the vortex 
\begin{equation}
\begin{aligned}
\vec d(\x)=&x_2\e_1-x_1\e_2.
\end{aligned} 
\end{equation}

\begin{figure}[ht]\label{f:2}
\begin{minipage}{0.25\textwidth}
\centering
  \includegraphics[width=\textwidth]{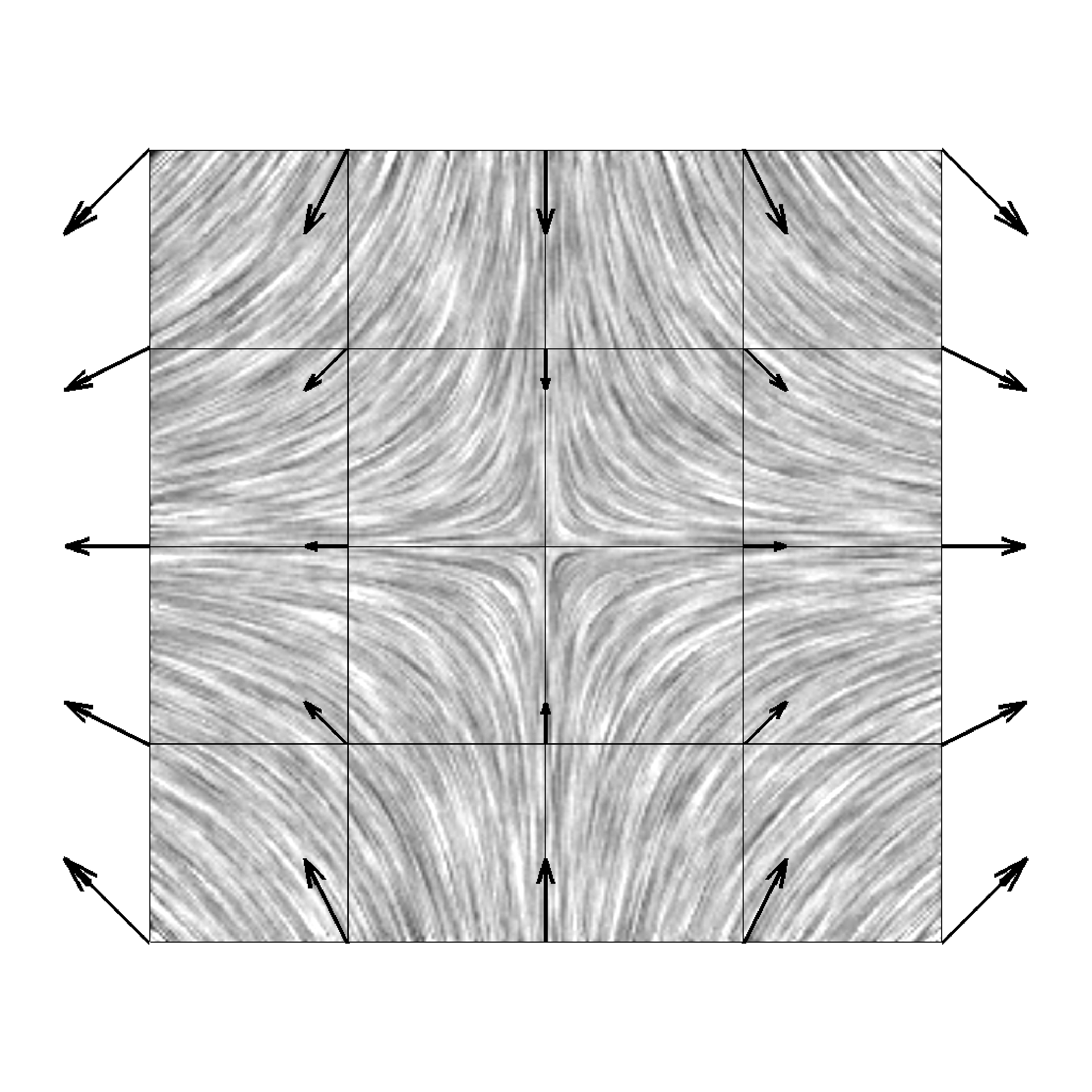}
\end{minipage}
\hspace{0.1cm}
\begin{minipage}{0.25\textwidth}
\centering
  \includegraphics[width=\textwidth]{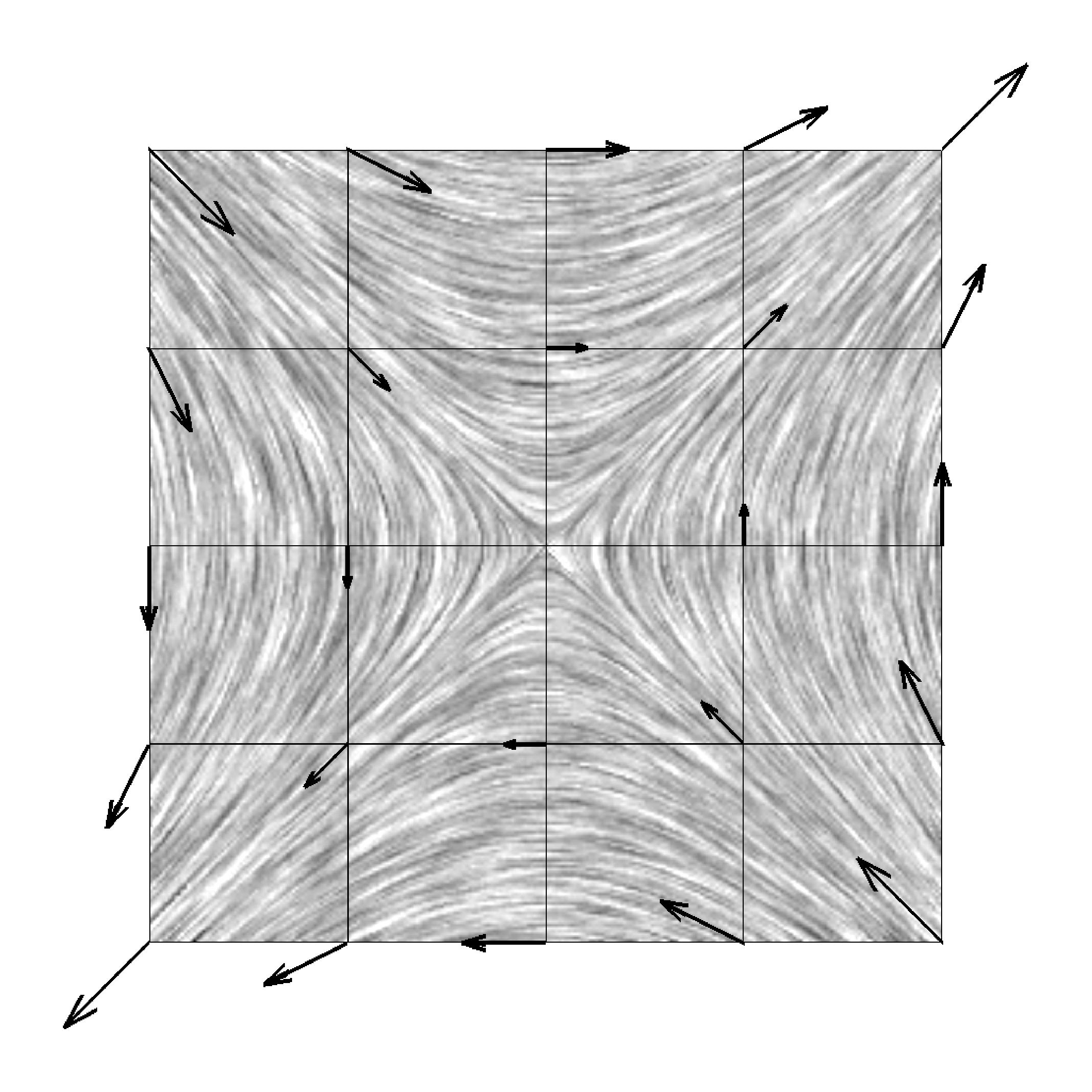}
\end{minipage}
\hspace{0.1cm}
\begin{minipage}{0.25\textwidth}
\centering
  \includegraphics[width=\textwidth]{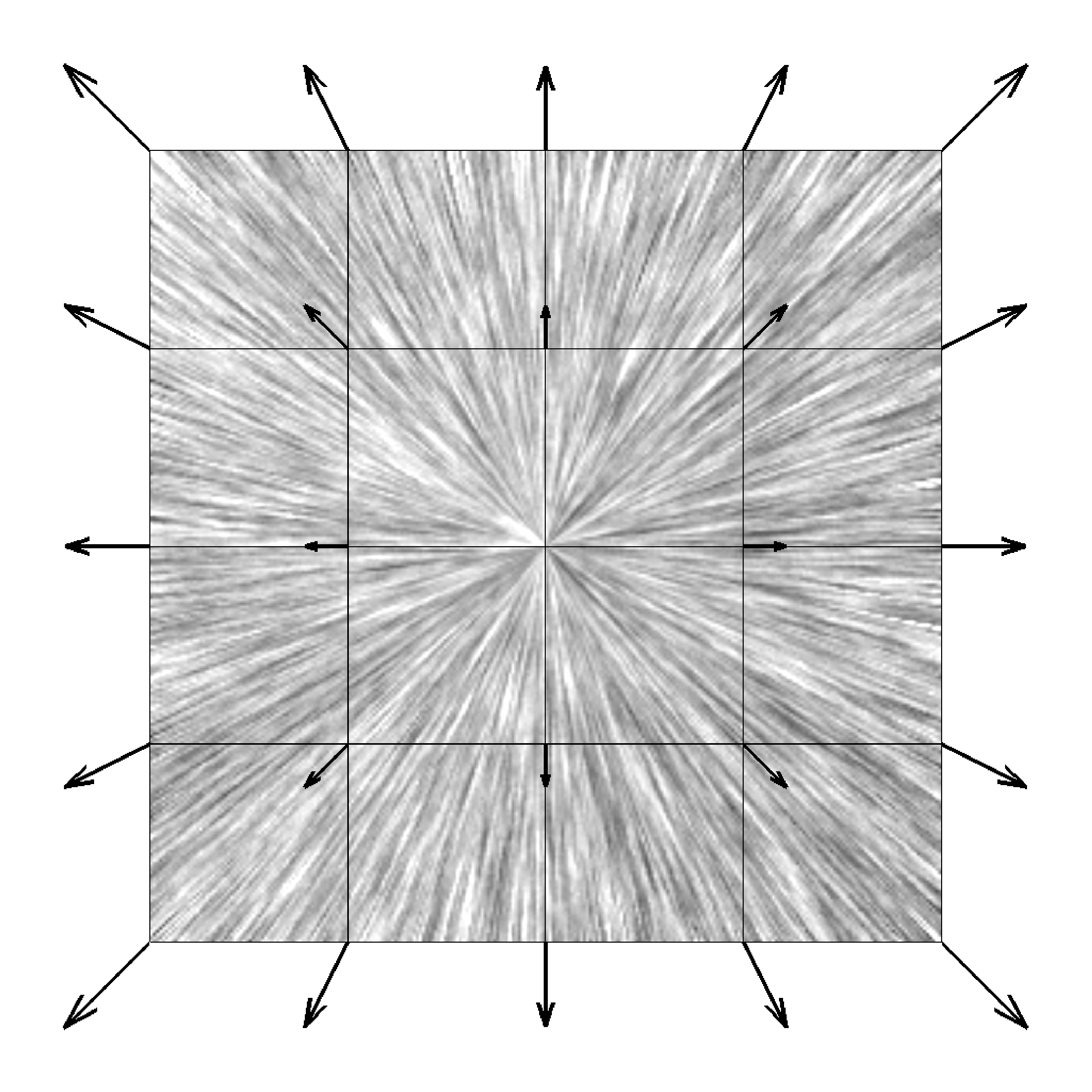}
\end{minipage}
\hspace{0.1cm}
\begin{minipage}{0.25\textwidth}
\centering
  \includegraphics[width=\textwidth]{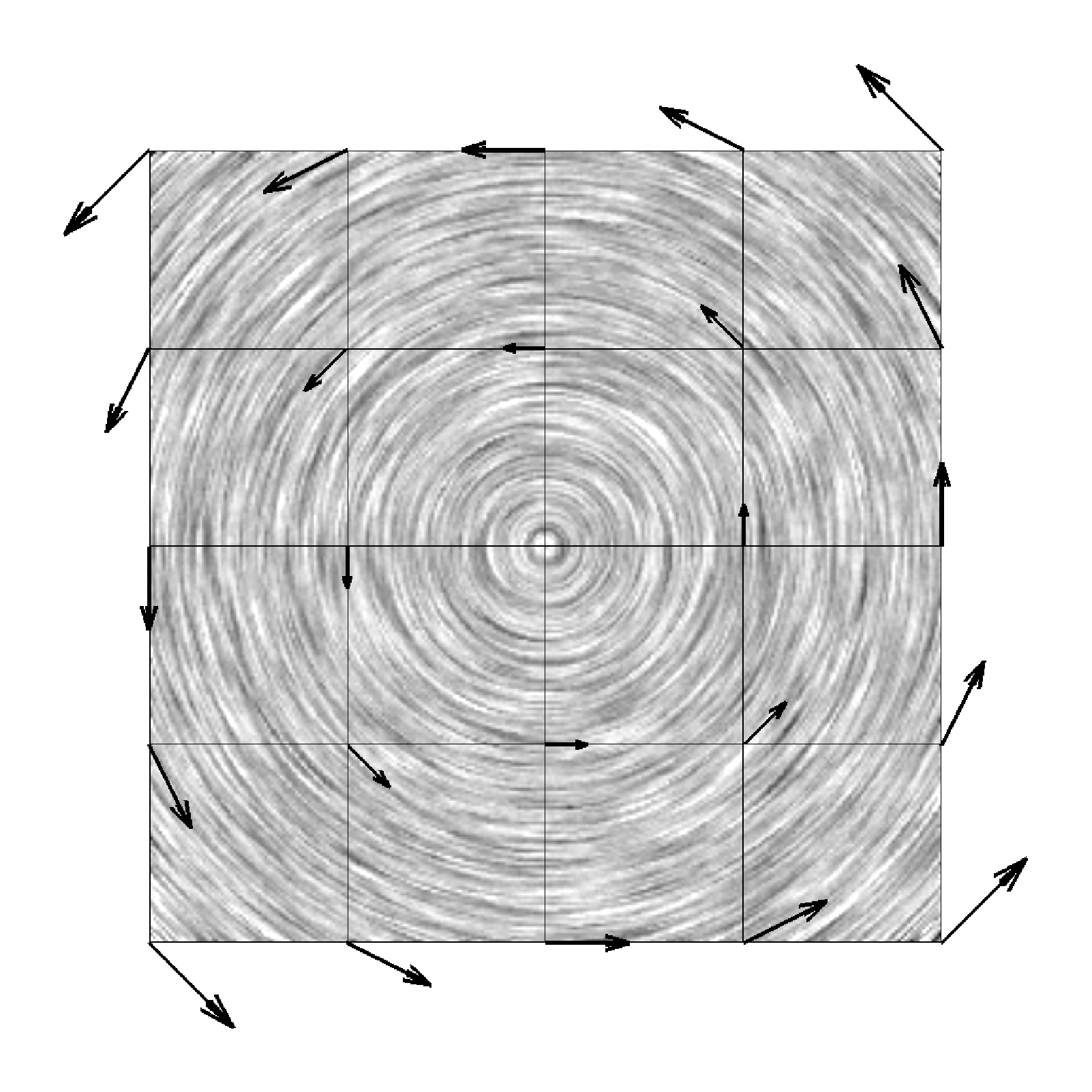}
\end{minipage}
\caption{From left to right: saddles $\a(\x)$, $\b(\x)$, source $\c(\x)$, and vortex $\vec d(\x)$ visualized with hedgehogs and LIC \cite{CL93}.}
\end{figure}
\begin{rem}\label{r:alt_parts} 
Instead of using the coefficients the vector fields from Figure \ref{f:2} can analogously be expressed by basic transformations
\begin{equation}
\begin{aligned}
\c(\x)=&\x,\\
\a(\x)=&-\e_2\x\e_2,\\
\vec d(\x)=&\e_{12}\x,\\
\b(\x)=&-\e_2\vec d(\x)\e_2.
\end{aligned} 
\end{equation}
The first three of them are the identity, a reflection at the hyperplane perpendicular to $\e_2$ and a rotation about $-\frac\pi2$. The last one $\b(\x)=-\e_2\vec d(\x)\e_2$ can be seen as a rotation about $-\frac\pi2$ followed by a reflection at the hyperplane perpendicular to $\e_2$ or as $\b(\x)=-\e_{12}\a(\x)$ a reflection at the hyperplane perpendicular to $\e_2$ followed by a rotation about $\frac\pi2$ or alternatively as $\b(\x)=-1/2(\e_1-\e_2)\x(\e_1-\e_2)$ just a reflection at the hyperplane perpendicular to $\e_1-\e_2$. From this description we immediately get
\begin{equation}
\begin{aligned}
\a(\x)\perp \b(\x),\\
\c(\x)\perp \vec d(\x),
\end{aligned} 
\end{equation}
and
\begin{equation}
\begin{aligned}
\vec{a}(\vec{x})^2= \vec{b}(\vec{x})^2=\vec{c}(\vec{x})^2= \vec{d}(\vec{x})^2=\vec{x}^2.
\end{aligned} 
\end{equation}
\end{rem}
The geometric products of them and their rotated copies at any position take very simple forms.
\begin{ex}\label{b:saddle}
 For a saddle $\a(\x)=x_1\e_1-x_2\e_2=-\e_2\x\e_2$ we get
\begin{equation}
\begin{aligned}
\operatorname{R} _{\alpha}(\a(\operatorname{R} _{-\alpha}(\x)))
=&e^{-\alpha \e_{12}}(-\e_2e^{\alpha \e_{12}}\x\e_2)
\\=&e^{-2\alpha \e_{12}}(-\e_2\x\e_2)
\\=&e^{-2\alpha \e_{12}}\a(\x)
\end{aligned} 
\end{equation}
and therefore the product 
\begin{equation}
\begin{aligned}\label{prod_s1}
\operatorname{R} _{\alpha}(\a(\operatorname{R} _{-\alpha}(\x)))\a(\x)=&\a(\x)^2e^{-2\alpha \e_{12}}
\end{aligned} 
\end{equation}
reveals twice the angle we looked for.
\end{ex}
\begin{rem}\label{r:pi/2}
The argument of $\a(\x)^2e^{-2\alpha \e_{12}}$ is only $-2\alpha$ for $\alpha\in[-\frac\pi2,\frac\pi2]$. 
Keeping track of the case differentiation is a hassle, so we restrict ourselves to this interval during the whole paper. For two-dimensional linear vector fields this is no restriction, because the by $\pi$ totally rotated copy of $\v(\x)$ suffices
\begin{equation}
\begin{aligned}
\operatorname{R}_\pi(\v(\operatorname{R}_{-\pi}(\x)))=&e^{-\pi \e_{12}}(\v(e^{\pi \e_{12}}(\x)))
\\=&-\v(-\x)
\\=&\v(\x).
\end{aligned} 
\end{equation}
That means an angle $\alpha$ in $[-\frac\pi2,\frac\pi2]$ can always be found to describe the rotational difference of $\v(\x)$ and its rotated copy.
\end{rem}
\begin{ex}\label{b:vortex}
For a vortex $\vec d(\x)=x_2\e_1-x_1\e_2=\e_{12}\x$ we get
\begin{equation}
\begin{aligned}
\operatorname{R} _{\alpha}(\vec d(\operatorname{R} _{-\alpha}(\x)))
=&e^{-\alpha \e_{12}}\e_{12}e^{\alpha \e_{12}}\x
\\=&e^{-\alpha \e_{12}}e^{\alpha \e_{12}}\e_{12}\x
\\=&\vec d(\x)
\end{aligned} 
\end{equation}
and therefore the product
\begin{equation}
\begin{aligned}\label{prod_v}
\operatorname{R} _{\alpha}(\vec d(\operatorname{R} _{-\alpha}(\x)))\vec d(\x)=&\vec d(x)^2
\end{aligned} 
\end{equation}
is real valued, what is to be interpreted as an angle of zero. That is not a surprise, because a vortex is invariant with respect to total rotations in the plane. Therefore it is not disturbing that we do not get the rotational information either. 
\end{ex}
\begin{ex}
A saddle with the shape $\b(\x)=x_2\e_1+x_1\e_2=\e_1\x\e_2$ suffices
\begin{equation}
\begin{aligned}\label{s2}
\operatorname{R} _{\alpha}(\vec b(\operatorname{R} _{-\alpha}(\x)))=&e^{-\alpha \e_{12}}\e_1e^{\alpha \e_{12}}\x\e_2
\\=&e^{-2\alpha \e_{12}}\e_1\x\e_2
\\=&e^{-2\alpha \e_{12}}\b(\x)
\end{aligned} 
\end{equation}
So the product of the saddles always leads to $-2\alpha$, too.
\end{ex}
\begin{ex}
A source $\c(\x)=x_1\e_1+x_2\e_2=\x$ leads to
\begin{equation}
\begin{aligned}\label{s}
\operatorname{R} _{\alpha}(\vec c(\operatorname{R} _{-\alpha}(\x)))=&e^{-\alpha \e_{12}}e^{\alpha \e_{12}}\x
\\=&\vec \x
\\=&\vec c(\x).
\end{aligned} 
\end{equation}
So the product of sources also leads to zero angle.
\end{ex}
\section{Composition of linear Fields from the examples}
\begin{lem}\label{l:vf_split}
Any linear vector field can be expressed as a linear combination of the four examples from the preceding section. That means for 
\begin{equation}
\begin{aligned}
\vec a(\x)=&x_1\e_1-x_2\e_2,\\
\vec b(\x)=&x_2\e_1+x_1\e_2,\\
\vec c(\x)=&x_1\e_1+x_2\e_2,\\
\vec d(\x)=&x_2\e_1-x_1\e_2\\
\end{aligned} 
\end{equation}
 there are $a,b,c,d\in\R$, such that
\begin{equation}
\begin{aligned}
\v(x)=a\vec a(\x)+b\vec b(\x)+c\vec c(\x)+d\vec d(\x).
\end{aligned} 
\end{equation}
\end{lem}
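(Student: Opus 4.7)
The plan is to proceed by direct computation: match coefficients of $\e_1$ and $\e_2$ on both sides and solve a $4\times 4$ linear system for $a,b,c,d$ in terms of $a_{11},a_{12},a_{21},a_{22}$. Since the statement is basically the assertion that $\{\vec a,\vec b,\vec c,\vec d\}$ forms a basis of the $4$-dimensional real vector space of linear maps $\R^2\to\R^2$, I expect no hidden difficulty.

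Concretely, I would first substitute the explicit forms of $\vec a(\x),\vec b(\x),\vec c(\x),\vec d(\x)$ into the expression $a\vec a(\x)+b\vec b(\x)+c\vec c(\x)+d\vec d(\x)$ and collect the $\e_1$- and $\e_2$-components. This yields
\begin{equation}
\begin{aligned}
a\vec a(\x)+b\vec b(\x)+c\vec c(\x)+d\vec d(\x)
=&\,\bigl((a+c)x_1+(b+d)x_2\bigr)\e_1\\
&+\bigl((b-d)x_1+(c-a)x_2\bigr)\e_2.
\end{aligned}
\end{equation}
Comparing with $\v(\x)=(a_{11}x_1+a_{12}x_2)\e_1+(a_{21}x_1+a_{22}x_2)\e_2$ gives the four linear equations $a+c=a_{11}$, $b+d=a_{12}$, $b-d=a_{21}$, $c-a=a_{22}$.

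Next I would solve this system explicitly, obtaining
\begin{equation}
\begin{aligned}
a=\tfrac12(a_{11}-a_{22}),\quad
b=\tfrac12(a_{12}+a_{21}),\\
c=\tfrac12(a_{11}+a_{22}),\quad
d=\tfrac12(a_{12}-a_{21}),
\end{aligned}
\end{equation}
which are uniquely determined real numbers. Substituting these coefficients back verifies the decomposition, completing the proof.

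There is no real obstacle: the only thing to observe is that the splitting above corresponds to the standard decomposition of a $2\times2$ matrix into its symmetric-traceless, off-diagonal-symmetric, trace, and antisymmetric parts, which are linearly independent and span the space of $2\times2$ real matrices. If desired, one could phrase the argument more abstractly by noting that the matrices associated with $\vec a,\vec b,\vec c,\vec d$ are manifestly linearly independent (a short determinant check), hence form a basis of $\R^{2\times 2}$, and the decomposition follows.
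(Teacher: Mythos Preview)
Your proof is correct and follows essentially the same approach as the paper: expand the linear combination, match coefficients against $a_{11},a_{12},a_{21},a_{22}$, and solve the resulting $4\times4$ system explicitly. The only discrepancy is a sign in $d$: the paper obtains $d=\tfrac12(-a_{12}+a_{21})$ because in its proof it silently reorders the general field as $a_{11}x_1\e_1+a_{12}x_1\e_2+a_{21}x_2\e_1+a_{22}x_2\e_2$, swapping the roles of $a_{12}$ and $a_{21}$ relative to the earlier definition you (consistently) used; your formula is the one matching the stated convention.
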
 
\begin{proof}
For the proof set 
\begin{equation}
\begin{aligned}
a=&\frac12(a_{11}-a_{22}),\\
b=&\frac12(a_{12}+a_{21}),\\
c=&\frac12(a_{11}+a_{22}),\\
d=&\frac12(-a_{12}+a_{21}),
\end{aligned} 
\end{equation}
that leads to
\begin{equation}
\begin{aligned}
a_{11}=&a+c,\\
a_{12}=&b-d,\\
a_{21}=&b+d,\\
a_{22}=&-a+c,
\end{aligned} 
\end{equation}
and therefore
\begin{equation}
\begin{aligned}
\v(x)=&a_{11}x_1\e_1+a_{12}x_1\e_2+a_{21}x_2\e_1+a_{22}x_2\e_2 
\\=&(a+c)x_1\e_1+(b-d)x_1\e_2
\\&+(b+d)x_2\e_1+(-a+c)x_2\e_2
\\=&a(x_1\e_1-x_2\e_2)+b(x_2\e_1+x_1\e_2)
\\&+c(x_1\e_1+x_2\e_2)+d(x_2\e_1-x_1\e_2)
\\=&a\vec a(\x)+b\vec b(\x)+c\vec c(\x)+d\vec d(\x).
\end{aligned} 
\end{equation}
\end{proof}
\begin{rem}
 The decomposition from Lemma \ref{l:vf_split} is analogous to the description of two-dimensional linear vector fields by Scheuermann in \cite{Sch99}. He stresses the isomorphism of the vectors and the rotors in $\clifford{2,0}$, denoting 
\begin{equation}
\begin{aligned}\label{E(z)}
\v(x)=&\bar E(\vec r)
\\=&E(z,\bar z)\e_1
\\=&((\alpha+\e_{12}\beta)z+(\gamma+\e_{12}\delta)\bar z)\e_1
\end{aligned} 
\end{equation}
with $z=x_1+x_2\e_{12},\bar z=x_1-x_2\e_{12}$. It leads to
\begin{equation}
\begin{aligned}
\v(x)=&((\alpha+\e_{12}\beta)(x_1+x_2\e_{12})
\\&+(\gamma+\e_{12}\delta)(x_1-x_2\e_{12}))\e_1
\\=&\alpha(x_1\e_1-x_2\e_2)+\beta(-x_2\e_1-x_1\e_2)
\\&+\gamma(x_1\e_1+x_2\e_2)+\delta(x_2\e_1-x_1\e_2)
\\=&\alpha\a(\x)-\beta\b(\x)+\gamma\c(\x)+\delta\vec d(\x),
\end{aligned} 
\end{equation}
which is the same decomposition as above if we identify $\alpha=a,-\beta=b,\gamma=c$ and $\delta=d$.
\end{rem}
%
\section{Influence of Superposition on the Product}
\begin{lem}\label{l:v1,v2}
Let the part in Lemma \ref{l:vf_split} of the two-dimensional linear vector field $\v(\x)$ consisting of the two saddles be denoted by 
\begin{equation}
\begin{aligned}
\v_1(\x)=a\a(\x)+b \b(\x)=(a-b\e_{12})(-\e_2\x\e_2)
\end{aligned} 
\end{equation}
 and the part consisting of the vortex and the source by 
\begin{equation}
\begin{aligned}
\v_2(\x)=c\c(\x)+d \vec d(\x)=(c+d\e_{12})\x.
\end{aligned} 
\end{equation}
Then their totally rotated copies take the shapes
\begin{equation}
\begin{aligned}
\operatorname{R} _{\alpha}(\v_1(\operatorname{R} _{-\alpha}(\x)))=&e^{-2\alpha \e_{12}}\v_1(\x),\\
\operatorname{R} _{\alpha}(\v_2(\operatorname{R} _{-\alpha}(\x)))=&\v_2(\x).
\end{aligned} 
\end{equation}
\end{lem}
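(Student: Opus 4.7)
The plan is to exploit linearity of the total-rotation operation together with the four examples of the preceding section. For any map $f:\R^2\to\clifford{2,0}$ the assignment $f\mapsto\operatorname{R}_\alpha\circ f\circ\operatorname{R}_{-\alpha}$ is $\R$-linear in $f$, since postcomposition with the outer rotor and precomposition with the inverse inner rotation are both linear operations.

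Applying this linearity to $\v_1=a\a+b\b$ and invoking the two saddle computations of the preceding section (the example for $\a$ and equation~(\ref{s2}) for $\b$), which have already established that $\a$ and $\b$ each acquire the factor $e^{-2\alpha\e_{12}}$ under total rotation, I would immediately obtain
\begin{equation*}
\operatorname{R}_\alpha(\v_1(\operatorname{R}_{-\alpha}(\x)))
=ae^{-2\alpha\e_{12}}\a(\x)+be^{-2\alpha\e_{12}}\b(\x)
=e^{-2\alpha\e_{12}}\v_1(\x).
\end{equation*}
The analogous argument applied to $\v_2=c\c+d\vec d$, using the vortex example together with equation~(\ref{s}) for the source, which show that $\c$ and $\vec d$ are invariant under total rotation, would yield $\operatorname{R}_\alpha(\v_2(\operatorname{R}_{-\alpha}(\x)))=\v_2(\x)$.

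As a sanity check (or as an independent route) I would also verify both identities directly from the closed forms stated in the lemma. For $\v_2(\x)=(c+d\e_{12})\x$ the prefactor lies in the commutative even subalgebra $\operatorname{span}\{1,\e_{12}\}$, so it commutes with $e^{\pm\alpha\e_{12}}$ and the two rotors simply cancel. For $\v_1(\x)=(a-b\e_{12})(-\e_2\x\e_2)$ the key step is the anticommutation $\e_{12}\e_2=-\e_2\e_{12}$, equivalently $\e_2 e^{\alpha\e_{12}}=e^{-\alpha\e_{12}}\e_2$: this lets the inner rotor slide past the $\e_2$ inside $\v_1(e^{\alpha\e_{12}}\x)$ and combine with the outer $e^{-\alpha\e_{12}}$ into the claimed $e^{-2\alpha\e_{12}}$, which then commutes past the even element $(a-b\e_{12})$.

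I do not anticipate any genuine obstacle: the content of the lemma is essentially a linear repackaging of the four preceding example computations, and the only subtlety is bookkeeping of signs when commuting $\e_{12}$ past the odd basis vector $\e_2$.
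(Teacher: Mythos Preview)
Your proposal is correct and follows essentially the same approach as the paper: the paper's proof is a single sentence invoking linearity of the rotation together with the earlier example computations for $\a,\b,\c,\vec d$. Your write-up is in fact more explicit than the paper (which only cites two of the four examples), and your additional direct verification via the closed forms $(a-b\e_{12})(-\e_2\x\e_2)$ and $(c+d\e_{12})\x$ is a nice sanity check that the paper does not include.
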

\begin{proof}
The assertion follows from the linearity of the rotation and the calculations from (\ref{prod_s1}) and (\ref{prod_v}).
\end{proof}
%
%
\begin{rem}
By means of the description by Scheuermann in \cite{Sch99} this result takes the following shape: 
The product of a two-dimensional linear vector field and its copy from total rotation by $\alpha$ yields an argument of $-2\alpha$ iff the vector field $E(z,\bar z)$, compare (\ref{E(z)}), only depends on $z$ and an argument of zero, iff it only depends on $\bar z$.
\end{rem}
%
%
\begin{lem}\label{l:product}
Let $\v_1(\x)=(a-b\e_{12})(-\e_2\x\e_2),\v_2(\x)=(c+d\e_{12})\x$ be the fields from Lemma \ref{l:v1,v2}. The product of any two-dimensional linear vector field $\v(\x)$ and its totally rotated copy $\u(\x)=\operatorname{R} _{\alpha}(\v(\operatorname{R} _{-\alpha}(\x)))$ takes the shape
\begin{equation}
\begin{aligned}\label{assertion}
\u(\x)\v(\x)=&e^{-2\alpha \e_{12}}\v_1(\x)^2+e^{-2\alpha \e_{12}}\v_1(\x)\v_2(\x)
\\&+\v_2(\x)\v_1(\x)+\v_2(\x)^2
\end{aligned} 
\end{equation}
with 
\begin{equation}
\begin{aligned}\label{detail}
\v_1(\x)^2=&(a^2+b^2)(x_1^2+x_2^2),\\
\v_1(\x)\v_2(\x)=&(a-b \e_{12})(c-d\e_{12}) (x_1^2-x_2^2+2x_1x_2 \e_{12}),\\
\v_2(\x)\v_1(\x)=&(a+b \e_{12})(c+d\e_{12}) (x_1^2-x_2^2-2x_1x_2 \e_{12}),\\
\v_2(\x)^2=&(c^2+d^2)(x_1^2+x_2^2).
\end{aligned} 
\end{equation}
\end{lem}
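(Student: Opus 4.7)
The plan is to combine the decomposition of Lemma \ref{l:vf_split} with the transformation identities of Lemma \ref{l:v1,v2}. First I would write $\v(\x)=\v_1(\x)+\v_2(\x)$ and exploit the linearity of the total-rotation map $\A\mapsto \operatorname{R}_\alpha(\A(\operatorname{R}_{-\alpha}(\x)))$ in $\A$; Lemma \ref{l:v1,v2} then immediately gives
\begin{equation*}
\u(\x)=e^{-2\alpha\e_{12}}\v_1(\x)+\v_2(\x).
\end{equation*}
Distributing the bilinear product
\begin{equation*}
\u(\x)\v(\x)=\bigl(e^{-2\alpha\e_{12}}\v_1(\x)+\v_2(\x)\bigr)\bigl(\v_1(\x)+\v_2(\x)\bigr)
\end{equation*}
and pulling the scalar-plus-bivector factor $e^{-2\alpha\e_{12}}$ to the left of the first two summands produces exactly the four-term identity (\ref{assertion}); note that no exponential appears in front of $\v_2\v_1$ because $\v_2$ is unaffected by the rotation.

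Verifying (\ref{detail}) is then a bookkeeping exercise. For $\v_1(\x)^2$ and $\v_2(\x)^2$ I would use that both summands are (real) vectors in $\R^2$, so their Clifford squares coincide with the Euclidean squared norms; substituting $\v_1=a\a+b\b$ and $\v_2=c\c+d\vec d$ and collecting coefficients gives the claimed factorizations $(a^2+b^2)(x_1^2+x_2^2)$ and $(c^2+d^2)(x_1^2+x_2^2)$. For the cross terms I would employ the key algebraic fact that the pseudoscalar $\e_{12}$ of $\clifford{2,0}$ anticommutes with every vector, equivalently $\w(c+d\e_{12})=(c-d\e_{12})\w$ for any vector $\w$. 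Applied to $\v_1\v_2=(a-b\e_{12})(-\e_2\x\e_2)(c+d\e_{12})\x$, this slides the second scalar-plus-bivector coefficient to the left with its bivector part negated, after which a short computation $(-\e_2\x\e_2)\x = x_1^2-x_2^2+2x_1x_2\e_{12}$ closes the case. The expression for $\v_2\v_1$ is handled by the symmetric manipulation, using $\x(-\e_2\x\e_2)=x_1^2-x_2^2-2x_1x_2\e_{12}$ to produce the opposite sign in the bivector term.

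The main obstacle I anticipate is purely the sign bookkeeping when commuting $\e_{12}$ past vectors; it is harmless but easy to get wrong. The ordering of the two $(\cdot\pm\cdot\e_{12})$-coefficients within each cross term does not matter because scalar-plus-bivector elements of $\clifford{2,0}$ form a commutative subalgebra isomorphic to $\C$, so the specific form chosen in (\ref{detail}) is equivalent to any reshuffling of these factors. Beyond this, the argument is a routine unwinding of the definitions built on Lemmas \ref{l:vf_split} and \ref{l:v1,v2}.
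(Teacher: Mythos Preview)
Your proposal is correct and follows essentially the same route as the paper: split $\v=\v_1+\v_2$ via Lemma~\ref{l:vf_split}, apply Lemma~\ref{l:v1,v2} to obtain $\u=e^{-2\alpha\e_{12}}\v_1+\v_2$, expand the product, and then verify the four summands by direct computation. Your write-up is in fact more explicit than the paper's, which simply declares the detailed identities (\ref{detail}) to be ``straight calculation'' and exhibits only the $\v_1\v_2$ term; your use of the anticommutation $\w\e_{12}=-\e_{12}\w$ and the observation that scalar-plus-bivector elements form a commutative subalgebra are exactly the right tools for organizing that calculation cleanly.
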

 \begin{proof}
  We have seen in Lemma \ref{l:vf_split} that the vector field can be split into $\v(x)=\v_1(\x)+\v_2(\x)$, with $\v_1(\x)=a\a(\x)+b\b(\x)$ and $\v_2(\x)=c\c(\x)+d\vec d(\x)$. Applying Lemma \ref{l:v1,v2} leads to 
\begin{equation}
\begin{aligned}
\operatorname{R} _{\alpha}(\v(\operatorname{R} _{-\alpha}(\x)))=&\operatorname{R} _{\alpha}((\v_1+\v_2)(\operatorname{R} _{-\alpha}(\x)))
\\=&\operatorname{R} _{\alpha}(\v_1(\operatorname{R} _{-\alpha}(\x)))+ \operatorname{R} _{\alpha}(\v_2(\operatorname{R} _{-\alpha}(\x)))
\\=&e^{-2\alpha \e_{12}}\v_1(\x)+\v_2(\x)
\end{aligned} 
\end{equation}
and therefore the product suffices (\ref{assertion}). 
The assertions about the exact shape of the summands follow from straight calculation, we only give the derivation of one of the mixed parts representatively 
\begin{equation}
\begin{aligned}
\v_1(\x)\v_2(\x)=&(a-b\e_{12})\a(\x)(c+d\e_{12})\c(\x)\\
=&(a-b\e_{12})(c-d\e_{12})(x_1\e_1-x_2\e_2)\x
\\=&(a-b\e_{12})(c-d\e_{12})(x_1^2-x_2^2+2x_1x_2\e_{12}).
\end{aligned} 
\end{equation}
\end{proof}
We want to determine what angle is encoded in the expression (\ref{assertion}) and look at some examples.
\begin{ex}\label{b:saddle+vortex}
For the sum of a saddle and a vortex $\v(\x)=\a(\x)+\vec d(\x)
$ the totally rotated copy suffices
\begin{equation}
\begin{aligned}
\operatorname{R} _{\alpha}(\v(\operatorname{R} _{-\alpha}(\x)))
=&\operatorname{R} _{\alpha}(\a(\operatorname{R} _{-\alpha}(\x)))+\operatorname{R} _{\alpha}(\vec d(\operatorname{R} _{-\alpha}(\x)))
\\=&e^{-2\alpha \e_{12}}\a(\x)+\vec d(\x)
\end{aligned} 
\end{equation}
so their product takes the shape
\begin{equation}
\begin{aligned}
& R_{\alpha}(\v(R_{-\alpha}(\x)))\v(\x)
\\ \stackrel{(5.5)}{=}& e^{-2\alpha\e_{12}}\x^2- e^{-2\alpha\e_{12}}(-2x_1x_2+(x_1^2-x_2^2)\e_{12})
\\ &- (-2x_1x_2-(x_1^2-x_2^2)\e_{12}) 
 + e^{\alpha\e_{12}}\x^2e^{-\alpha\e_{12}}
\\=& \big((e^{-\alpha\e_{12}}+e^{\alpha\e_{12}})     (\x^2+2x_1x_2)
\\& -(e^{-\alpha\e_{12}}-e^{\alpha\e_{12}})
     (x_1^2-x_2^2)\e_{12}\big) e^{-\alpha\e_{12}} 
 \\=& \big(2 \cos (\alpha)(x_1+x_2)^2+2\sin (\alpha)(x_1^2-x_2^2)\big)e^{-\alpha\e_{12}}.
\end{aligned} 
\end{equation}
Its inverse reveals the correct misalignment by $\alpha$.
\end{ex}
All the examples we had might lead to the assumption, that the argument $\varphi$ of the polar form of the geometric product always takes a value in $[0,-2\alpha]$. But the following counterexample shows that this is not true.
\begin{ex}
 The product of the linear vector field $\v(\x)=\a(\x)+2\c(\x)=3x_1\e_1+x_2\e_2$, which is a superposition of a saddle and a source, and its copy rotated by $\alpha=\frac\pi4$ has the value 
\begin{equation}
\begin{aligned}
&\operatorname{R} _{\alpha}(\v(\operatorname{R} _{-\alpha}(\x)))\v(\x)
\\=&((2x_1+x_2)\e_1+(x_1+2x_2)\e_2)(3x_1\e_1+x_2\e_2)
\\=&(2(3x_1^2+2x_1x_2+x_2^2))-(3x_1^2+4x_1x_2-x_2^2)\e_{12}
\end{aligned} 
\end{equation}
If we evaluate it at the position $\x=-\e_1+\e_2$  we get
\begin{equation}
\begin{aligned}
&\operatorname{R} _{\alpha}(\v(\operatorname{R} _{-\alpha}(-\e_1+\e_2)))\v(-\e_1+\e_2)
=4+2\e_{12},
\end{aligned} 
\end{equation}
the argument of which is positive. 
\end{ex}
In the coming sections we will show that in contrast to the product the assumption will hold for the geometric correlation over a symmetric area.
\section{Influence of Superposition on the Correlation}
Heuristic shows, that a case like the last example appears relatively sparsely. Using the average over a larger area could erase such appearances. We will show that the argument $\varphi$ is always in $[0,-2\alpha]$ if we take the integral of the product over an area $A$  symmetric with respect to both coordinate axes, like a square or a circle. This integral is equivalent to the correlation at the origin, if we assume the vector fields to vanish outside this area.
\begin{thm}\label{t:cor}
 Let the two-dimensional vector field $\v(\x)$ be linear within and zero outside of an area $A$ symmetric with respect to both coordinate axes. The correlation at the origin with its totally rotated copy $\u(\x)=\operatorname{R} _{\alpha}(\v(\operatorname{R} _{-\alpha}(\x)))$ satisfies
\begin{equation}
\begin{aligned}\label{assertion:cor}
(\u\star\v)(0)
&=e^{-2\alpha \e_{12}}||\v_1(\x)||_{L^2(A)}^2
+||\v_2(\x)||_{L^2(A)}^2
\end{aligned} 
\end{equation}
with $\v_1(\x)=(a-b\e_{12})(-\e_2\x\e_2),\v_2(\x)=(c+d\e_{12})\x$ from Lemma \ref{l:v1,v2}.
\end{thm}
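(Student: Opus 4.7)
The strategy is to reduce the correlation at the origin to an integral of the pointwise geometric product, invoke Lemma \ref{l:product} to split the integrand into four pieces, and then exploit the symmetry of $A$ to show that the two mixed pieces integrate to zero. Only the two ``pure'' pieces survive, and they are precisely what produces the two $L^2$-norms in the claim.

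\medskip

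\noindent First I observe that for a vector-valued field $\u(\x)\in\clifford{2,0}$ the reversion acts trivially, since $\overline{\u}=\u$ on grade $1$. Together with the fact that $\v$ (and hence $\u$) vanishes outside $A$, the definition of the geometric cross correlation gives
\begin{equation*}
(\u\star\v)(0)=\int_{\R^2}\overline{\u(\x)}\v(\x)\d^2\x=\int_A\u(\x)\v(\x)\d^2\x.
\end{equation*}
Now I substitute the pointwise expansion from Lemma \ref{l:product} into this integral. This yields four summands: $e^{-2\alpha\e_{12}}\int_A\v_1^2$, $e^{-2\alpha\e_{12}}\int_A\v_1\v_2$, $\int_A\v_2\v_1$, and $\int_A\v_2^2$.

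\medskip

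\noindent For the two diagonal terms, Lemma \ref{l:product} gives $\v_1(\x)^2=(a^2+b^2)(x_1^2+x_2^2)$ and $\v_2(\x)^2=(c^2+d^2)(x_1^2+x_2^2)$, both scalar-valued and non-negative. Since $\v_1$ and $\v_2$ are vectors, $|\v_i(\x)|^2=\v_i(\x)^2$, so integration over $A$ produces exactly $\|\v_1\|_{L^2(A)}^2$ and $\|\v_2\|_{L^2(A)}^2$ respectively. These combine with the prefactors to give the two terms on the right-hand side of (\ref{assertion:cor}).

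\medskip

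\noindent The core of the argument is that the mixed terms $\int_A\v_1\v_2\d^2\x$ and $\int_A\v_2\v_1\d^2\x$ both vanish. By Lemma \ref{l:product}, each of these integrands is a multivector-valued coefficient $(a\mp b\e_{12})(c\mp d\e_{12})$ multiplied by $(x_1^2-x_2^2\pm 2x_1x_2\e_{12})$. Since the coefficients do not depend on $\x$, it suffices to show
\begin{equation*}
\int_A(x_1^2-x_2^2)\d^2\x=0\qquad\text{and}\qquad\int_A x_1x_2\d^2\x=0.
\end{equation*}
The second vanishes because $x_1x_2$ is odd in $x_1$ (or in $x_2$) and $A$ is symmetric with respect to both coordinate axes. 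The first vanishes because in the canonical symmetric domains under consideration — a disk or a square centered at the origin with sides parallel to the axes — the swap $(x_1,x_2)\mapsto(x_2,x_1)$ leaves $A$ invariant and exchanges the two terms, so $\int_A x_1^2=\int_A x_2^2$. Combining these four computations yields (\ref{assertion:cor}).

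\medskip

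\noindent The main obstacle I anticipate is the bookkeeping between scalar and bivector parts in the mixed terms — it must be verified that \emph{every} component of $\v_1\v_2$ and $\v_2\v_1$ is a $\R$-linear combination of $x_1^2-x_2^2$ and $x_1x_2$ (no $x_1^2+x_2^2$ contribution sneaks in), so that the vanishing integrals above really kill these summands completely. A secondary subtlety is the precise geometric hypothesis on $A$: pure reflection symmetry in both axes is enough to kill the $x_1x_2$ integral, but the $(x_1^2-x_2^2)$ integral additionally requires a symmetry exchanging the two coordinates, which is automatic for the prototypical examples (disk, square) mentioned in the hypothesis.
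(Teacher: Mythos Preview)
Your approach is essentially identical to the paper's: invoke Lemma \ref{l:product} to expand $\u(\x)\v(\x)$ into four terms, then use the symmetry of $A$ to kill the two mixed integrals via $\int_A(x_1^2-x_2^2)=0$ and $\int_A x_1x_2=0$, leaving the two squared norms. Your observation in the final paragraph is a genuine sharpening: reflection symmetry in both axes alone (e.g.\ a non-square rectangle) does \emph{not} force $\int_A x_1^2=\int_A x_2^2$, so the paper's stated hypothesis is slightly too weak and one really needs the additional diagonal symmetry present in the disk and square examples the paper has in mind.
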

\begin{proof}
We already know from Lemma \ref{l:product} that the product of the vector field and its rotated copy takes the form
\begin{equation}
\begin{aligned}
\u(\x)\v(\x)=&e^{-2\alpha \e_{12}}\v_1(\x)^2+e^{-2\alpha \e_{12}}\v_1(\x)\v_2(\x)
\\&+\v_2(\x)\v_1(\x)+\v_2(\x)^2.
\end{aligned} 
\end{equation}
Taking into account (\ref{detail}) and the fact, that the integral over the symmetric domain $A$ over $x_1^2-x_2^2$ is zero as well as the integral over $x_1x_2$, we get
\begin{equation}
\begin{aligned}\label{istnull}
\int_{A}\v_1(\x)\v_2(\x)\d^2 x=&\int_{[-l,l]^2}(a-b \e_{12})(c-d\e_{12}) 
\\&\cdot(x_1^2-x_2^2+2x_1x_2 \e_{12})\d^2 \x
\\=&0
\end{aligned} 
\end{equation}
and $\int_{A}\v_2(\x)\v_1(\x)\d^2=0$, too. That is why the integral over the product reduces to 
\begin{equation}
\begin{aligned}
\int_{A}\u(\x)\v(\x)\d^2 x
=&\int_{A}e^{-2\alpha \e_{12}}\v_1(\x)^2+\v_2(\x)\v_1(\x)
 \\&+e^{-2\alpha \e_{12}}\v_1(\x)\v_2(\x)+\v_2(\x)^2 \d^2\x
\\=&e^{-2\alpha \e_{12}}||\v_1(\x)||_{L^2(A)}^2
+||\v_2(\x)||_{L^2(A)}^2.
\end{aligned} 
\end{equation}
\end{proof}
\begin{rem}
 Please note that an integral over an unsymmetric area does in general not lead to a result without the mixed terms $\v_1(\x)\v_2(\x)$.
\end{rem}

\section{Detection of the Angle}
Now we want to use Theorem \ref{t:cor} to evaluate the angle $\alpha$ by which our pattern and our vector field differ.
First assume we have the analytical description of the pattern. 
\begin{cor}
If we know the shape of $\v_1(\x)$ and $\v_2(\x)$ from Lemma \ref{l:v1,v2}, we can determine the angle $\alpha$ of the rotational misalignment of the reference pattern and the vector field from
\begin{equation}
\begin{aligned}
\alpha=&-\frac12\arg((\u\star\v_1)(0)).
\end{aligned} 
\end{equation}
\end{cor}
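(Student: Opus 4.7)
The plan is to follow the strategy of Theorem \ref{t:cor}, but with the correlation taken against $\v_1$ rather than against the full reference $\v$: since only the $\v_1$ component picks up the rotational phase $e^{-2\alpha\e_{12}}$, this choice should isolate it cleanly and leave no cross contribution from $\v_2$.

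First, by linearity of the rotation operator together with Lemma \ref{l:v1,v2},
\begin{equation*}
\u(\x) = \operatorname{R}_{\alpha}(\v_1(\operatorname{R}_{-\alpha}(\x))) + \operatorname{R}_{\alpha}(\v_2(\operatorname{R}_{-\alpha}(\x))) = e^{-2\alpha\e_{12}}\v_1(\x) + \v_2(\x).
\end{equation*}
Using the anticommutation of $\e_{12}$ with plane-vectors, $e^{-2\alpha\e_{12}}\v_1(\x)$ is itself a grade-$1$ element, so both summands of $\u(\x)$ are vectors and the reversion acts trivially, $\overline{\u(\x)} = \u(\x)$. Substituting into the definition of the geometric correlation gives
\begin{equation*}
(\u \star \v_1)(0) = \int_A \bigl(e^{-2\alpha\e_{12}}\v_1(\x)^2 + \v_2(\x)\v_1(\x)\bigr)\,d^2\x.
\end{equation*}

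Next, I would treat the two summands separately. The diagonal term is immediate: $\v_1(\x)^2 = |\v_1(\x)|^2 \in \R$ is a non-negative scalar, so the rotor pulls out of the integral to yield $e^{-2\alpha\e_{12}}||\v_1||_{L^2(A)}^2$. For the mixed term, the explicit formula from Lemma \ref{l:product} shows that $\v_2(\x)\v_1(\x)$ is a fixed multivector coefficient times $x_1^2 - x_2^2 - 2x_1 x_2\,\e_{12}$; integrating over the axis-symmetric region $A$ kills both $\int_A (x_1^2 - x_2^2)\,d^2\x$ and $\int_A x_1 x_2\,d^2\x$, exactly as in equation (\ref{istnull}) of the proof of Theorem \ref{t:cor}. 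The cross term therefore integrates to zero and
\begin{equation*}
(\u \star \v_1)(0) = ||\v_1||_{L^2(A)}^2\, e^{-2\alpha\e_{12}}.
\end{equation*}

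Finally, assuming a non-degenerate reference ($\v_1 \not\equiv 0$ on $A$), the prefactor is a strictly positive real, so the argument of $(\u \star \v_1)(0)$ equals exactly $-2\alpha$; combined with the restriction $\alpha \in [-\tfrac{\pi}{2},\tfrac{\pi}{2}]$ from Remark \ref{r:pi/2}, the angle is recovered unambiguously as $\alpha = -\tfrac{1}{2}\arg((\u \star \v_1)(0))$. The only real subtlety is careful bookkeeping with the reversion and with the anticommutation of $\e_{12}$ and plane-vectors when writing $\overline{\u}$; beyond that the proof amounts to re-invoking the symmetric-integration cancellation already established for Theorem \ref{t:cor}.
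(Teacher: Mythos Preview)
Your proof is correct and follows exactly the natural line of argument: decompose $\u$ via Lemma \ref{l:v1,v2}, correlate against $\v_1$ alone, and use the symmetric-integration cancellation from (\ref{istnull}) to kill the cross term $\v_2\v_1$, leaving $||\v_1||_{L^2(A)}^2\,e^{-2\alpha\e_{12}}$. The paper itself states this Corollary without proof, treating it as an immediate consequence of Theorem \ref{t:cor}; your write-up supplies precisely the details one would expect, and in the same spirit.
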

Please note, that knowledge about the other vector field, the counterpart to the pattern, is not necessary and that we usually have analytic information about the pattern, because we generally know what we are looking for.
\par
If we do not have the analytic description of the pattern, we can still use the correlation, because the argument is a more or less good approximation to the true rotational difference $\alpha$.
%
%
%
%
\begin{lem} \label{l:phi}
 Let the two-dimensional vector field $\v(\x)$ be linear within and zero outside of an area $A$ symmetric with respect to both coordinate axes. The angle $\varphi$ which is the argument of the correlation at the origin with its totally rotated copy $\u(\x)=\operatorname{R} _{\alpha}(\v(\operatorname{R} _{-\alpha}(\x)))$ satisfies
\begin{equation}
\begin{aligned}
0\geq\varphi\geq-2\alpha,&\text{ for }\alpha\geq0,\\
0\leq\varphi\leq-2\alpha,&\text{ else.}
\end{aligned} 
\end{equation}
\end{lem}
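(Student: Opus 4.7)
The plan is to apply Theorem \ref{t:cor} directly, which expresses the correlation as
\begin{equation*}
(\u\star\v)(0)=e^{-2\alpha \e_{12}}N_1+N_2, \qquad N_1:=\|\v_1\|_{L^2(A)}^2\geq 0,\ N_2:=\|\v_2\|_{L^2(A)}^2\geq 0.
\end{equation*}
This element lives in the even subalgebra of $\clifford{2,0}$, which is isomorphic to $\C$ via $\e_{12}\leftrightarrow i$. Under this identification the statement reduces to a classical complex-variable fact: the argument of the sum $N_2+N_1 e^{-2i\alpha}$ of two complex numbers with non-negative magnitudes $N_1,N_2$ and angles $0$ and $-2\alpha$ lies on the shorter arc between those two angles, provided they differ by less than $\pi$. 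Since Remark \ref{r:pi/2} restricts $\alpha$ to $[-\tfrac\pi2,\tfrac\pi2]$, the angular separation is $|2\alpha|\leq\pi$, so this is applicable.

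First I would dispose of the degenerate cases. If $N_1=0$ the correlation equals $N_2\geq 0$ and $\varphi=0$; if $N_2=0$ it equals $N_1 e^{-2\alpha \e_{12}}$ and $\varphi=-2\alpha$. Both satisfy the weak inequalities. The case $\alpha=0$ is trivial. Thus I may assume $N_1,N_2>0$ and $\alpha\in(-\tfrac\pi2,\tfrac\pi2)\setminus\{0\}$.

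The core step would be the midpoint-rotation identity
\begin{equation*}
N_2+N_1 e^{-2\alpha\e_{12}}= e^{-\alpha\e_{12}}\bigl(N_2 e^{\alpha\e_{12}}+N_1 e^{-\alpha\e_{12}}\bigr),
\end{equation*}
so the bracket has scalar part $(N_1+N_2)\cos\alpha>0$ and bivector part $(N_2-N_1)\sin\alpha\,\e_{12}$. Because the scalar part is strictly positive for $|\alpha|<\tfrac\pi2$, the bracket has a principal argument $\psi\in(-\tfrac\pi2,\tfrac\pi2)$, and from $|(N_2-N_1)\sin\alpha|\leq(N_1+N_2)|\sin\alpha|$ together with monotonicity of $\arctan$ one obtains $|\psi|\leq|\alpha|$. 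Multiplying by $e^{-\alpha\e_{12}}$ adds $-\alpha$ to the argument, so $\varphi=-\alpha+\psi\in[-\alpha-|\alpha|,-\alpha+|\alpha|]$. A case split on the sign of $\alpha$ translates this interval into exactly the two assertions of the lemma.

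The main obstacle I anticipate is not the calculation itself but keeping the angular conventions coherent: the correlation is a multivector, $\varphi$ is defined only modulo $2\pi$, and one must ensure that the branch of the argument used coincides with the rotation angle $-2\alpha$ rather than its $2\pi$-shift. The restriction to $\alpha\in[-\tfrac\pi2,\tfrac\pi2]$ keeps the two summand-angles $\{0,-2\alpha\}$ inside a half-plane, which precisely prevents such ambiguity. The boundary points $\alpha=\pm\tfrac\pi2$ can be treated as a limit: there the two summands are antiparallel, the correlation collapses to the real scalar $N_2-N_1$ with argument in $\{0,\pm\pi\}=\{0,-2\alpha\}$, so the weak inequalities still hold.
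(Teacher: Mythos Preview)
Your argument is correct. The midpoint-rotation identity
\[
N_2+N_1 e^{-2\alpha\e_{12}}=e^{-\alpha\e_{12}}\bigl((N_1+N_2)\cos\alpha+(N_2-N_1)\sin\alpha\,\e_{12}\bigr)
\]
reduces the problem to bounding the argument of a rotor with strictly positive scalar part, and the estimate $|\psi|\le|\alpha|$ via $|N_2-N_1|\le N_1+N_2$ is clean. The observation that $-\alpha+\psi\in(-\pi,\pi)$ so no branch correction is needed is exactly the point that has to be checked, and you do check it.

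The paper takes a different and noticeably heavier route: it writes $\varphi$ directly as an $\operatorname{atan2}$ of the scalar and bivector parts of $e^{-2\alpha\e_{12}}N_1+N_2$ and then performs a four-way case split on the signs of $-\sin(2\alpha)N_1$ and $\cos(2\alpha)N_1+N_2$. In each quadrant it deletes the $N_2$ term from the denominator and invokes monotonicity of $\arctan$ to compare with $-2\alpha$. The authors themselves call this ``very technical'' and point to a picture (Figure~\ref{f:3}) for the real geometric content. Your symmetrisation about the bisector $-\alpha$ is essentially that picture turned into algebra: it replaces the four quadrant cases by a single estimate and makes the role of the half-plane condition $|\alpha|<\tfrac\pi2$ transparent. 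What the paper's approach buys is that it never leaves the $\operatorname{atan2}$ formalism, so there is no separate discussion of branch choice; what yours buys is brevity and a structural explanation of why the bound is exactly $[-2\alpha,0]$.
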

The proof of Lemma \ref{l:phi} is very technical. Figure \ref{f:3} provides a more fundamental insight of its assertion by exploiting the homomorphism of the rotors in $\clifford{2,0}$ and the complex numbers.
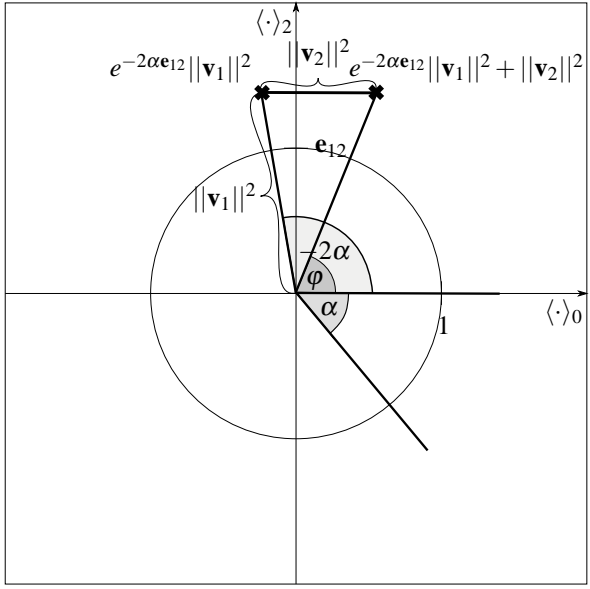
\begin{figure}[ht]\label{f:3}
\psset{unit=0.55pt}
\begin{pspicture}(402,402)
{
\newrgbcolor{curcolor}{0 0 0}
\pscustom[linewidth=0.40126583,linecolor=curcolor]
{
\newpath
\moveto(0.42425418,400.33428955)
\lineto(400.42425418,400.33428955)
\lineto(400.42425418,0.33447266)
\lineto(0.42425418,0.33447266)
\lineto(0.42425418,400.33428955)
\closepath
}
}
{
\newrgbcolor{curcolor}{0 0 0}
\pscustom[linewidth=0.40000001,linecolor=curcolor]
{
\newpath
\moveto(0.42425346,200.33424262)
\lineto(400.42425,200.33424262)
}
}
{
\newrgbcolor{curcolor}{0 0 0}
\pscustom[linestyle=none,fillstyle=solid,fillcolor=curcolor]
{
\newpath
\moveto(396.42424994,200.33424262)
\lineto(394.82424992,198.73424259)
\lineto(400.42425,200.33424262)
\lineto(394.82424992,201.93424264)
\lineto(396.42424994,200.33424262)
\closepath
}
}
{
\newrgbcolor{curcolor}{0 0 0}
\pscustom[linewidth=0.40000001,linecolor=curcolor]
{
\newpath
\moveto(396.42424994,200.33424262)
\lineto(394.82424992,198.73424259)
\lineto(400.42425,200.33424262)
\lineto(394.82424992,201.93424264)
\lineto(396.42424994,200.33424262)
\closepath
}
}
{
\newrgbcolor{curcolor}{0 0 0}
\pscustom[linewidth=0.31950482,linecolor=curcolor]
{
\newpath
\moveto(300.42424904,200.33429313)
\curveto(300.42424904,145.10581793)(255.65272495,100.33429273)(200.42425111,100.33429273)
\curveto(145.19577728,100.33429273)(100.42425319,145.10581793)(100.42425319,200.33429313)
\curveto(100.42425319,255.56276833)(145.19577728,300.33429352)(200.42425111,300.33429352)
\curveto(255.65272495,300.33429352)(300.42424904,255.56276833)(300.42424904,200.33429313)
\closepath
}
}
{
\newrgbcolor{curcolor}{0.87058824 0.87058824 0.87058824}
\pscustom[linestyle=none,fillstyle=solid,fillcolor=curcolor]
{
\newpath
\moveto(223.74339,172.78584262)
\curveto(234.14187,182.35704262)(235.95019,187.60264262)(236.56241,200.41904262)
\lineto(200.73361,200.51104262)
\curveto(200.73361,200.51104262)(226.74933,170.20224262)(226.57183,169.33864262)
\lineto(226.57183,169.33964262)
\lineto(223.74339,172.78604262)
\closepath
}
}
{
\newrgbcolor{curcolor}{0 0 0}
\pscustom[linewidth=0.80000001,linecolor=curcolor]
{
\newpath
\moveto(223.74339,172.78584262)
\curveto(234.14187,182.35704262)(235.95019,187.60264262)(236.56241,200.41904262)
\lineto(200.73361,200.51104262)
\curveto(200.73361,200.51104262)(226.74933,170.20224262)(226.57183,169.33864262)
\lineto(226.57183,169.33964262)
\lineto(223.74339,172.78604262)
\closepath
}
}
{
\newrgbcolor{curcolor}{0 0 0}
\pscustom[linewidth=2,linecolor=curcolor]
{
\newpath
\moveto(176.9789,338.69294262)
\lineto(257.64287,338.40014262)
}
}
{
\newrgbcolor{curcolor}{0 0 0}
\pscustom[linewidth=0.40445113,linecolor=curcolor]
{
\newpath
\moveto(200.52913,200.66544262)
\curveto(175.70967,195.47944262)(201.57185,273.34698262)(176.01988,267.26738262)
\curveto(201.47703,272.95622262)(151.27909,333.49698262)(176.27272,338.44154262)
}
}
{
\newrgbcolor{curcolor}{0 0 0}
\pscustom[linewidth=0.49681437,linecolor=curcolor]
{
\newpath
\moveto(255.50993,341.69762262)
\curveto(255.82777,356.39526262)(214.96041,333.39486262)(215.70191,348.60482262)
\curveto(215.16913,333.49034262)(176.07816,355.68350262)(175.91349,340.91218262)
}
}
{
\newrgbcolor{curcolor}{0 0 0}
\pscustom[linewidth=4,linecolor=curcolor]
{
\newpath
\moveto(173.01354,334.62002262)
\lineto(181.04925,342.47714262)
}
}
{
\newrgbcolor{curcolor}{0 0 0}
\pscustom[linewidth=4,linecolor=curcolor]
{
\newpath
\moveto(173.01354,342.47714262)
\lineto(181.13854,334.62002262)
}
}
{
\newrgbcolor{curcolor}{0 0 0}
\pscustom[linewidth=4,linecolor=curcolor]
{
\newpath
\moveto(251.51193,334.87322262)
\lineto(259.54765,342.73034262)
}
}
{
\newrgbcolor{curcolor}{0 0 0}
\pscustom[linewidth=4,linecolor=curcolor]
{
\newpath
\moveto(251.51193,342.73034262)
\lineto(259.63693,334.87322262)
}
}
{
\newrgbcolor{curcolor}{0.94117647 0.94117647 0.93725491}
\pscustom[linestyle=none,fillstyle=solid,fillcolor=curcolor]
{
\newpath
\moveto(200.50335,200.76784262)
\curveto(201.98707,200.67584262)(253.13949,200.55984262)(253.05971,200.50784262)
\curveto(249.41249,247.64982262)(209.16017,256.34174262)(191.56531,252.39886262)
\curveto(194.20414,234.86378262)(197.30825,218.22606262)(200.50335,200.76784262)
\closepath
}
}
{
\newrgbcolor{curcolor}{0 0 0}
\pscustom[linewidth=1.00711989,linecolor=curcolor]
{
\newpath
\moveto(200.50335,200.76784262)
\curveto(201.98707,200.67584262)(253.13949,200.55984262)(253.05971,200.50784262)
\curveto(249.41249,247.64982262)(209.16017,256.34174262)(191.56531,252.39886262)
\curveto(194.20414,234.86378262)(197.30825,218.22606262)(200.50335,200.76784262)
\closepath
}
}
{
\newrgbcolor{curcolor}{0 0 0}
\pscustom[linewidth=0.40000001,linecolor=curcolor]
{
\newpath
\moveto(200.42425,0.33418262)
\lineto(200.42425,400.33430262)
}
}
{
\newrgbcolor{curcolor}{0 0 0}
\pscustom[linestyle=none,fillstyle=solid,fillcolor=curcolor]
{
\newpath
\moveto(200.42425,396.33430256)
\lineto(202.02425002,394.73430253)
\lineto(200.42425,400.33430262)
\lineto(198.82424998,394.73430253)
\lineto(200.42425,396.33430256)
\closepath
}
}
{
\newrgbcolor{curcolor}{0 0 0}
\pscustom[linewidth=0.40000001,linecolor=curcolor]
{
\newpath
\moveto(200.42425,396.33430256)
\lineto(202.02425002,394.73430253)
\lineto(200.42425,400.33430262)
\lineto(198.82424998,394.73430253)
\lineto(200.42425,396.33430256)
\closepath
}
}
{
\newrgbcolor{curcolor}{0 0 0}
\pscustom[linewidth=1.60000002,linecolor=curcolor]
{
\newpath
\moveto(200.21129,201.11104262)
\lineto(176.41396,338.97918262)
}
}
{
\newrgbcolor{curcolor}{0.78431374 0.78431374 0.78431374}
\pscustom[linestyle=none,fillstyle=solid,fillcolor=curcolor]
{
\newpath
\moveto(210.53375,225.76186262)
\lineto(200.38179,201.21024262)
\curveto(200.38179,201.21024262)(227.91513,200.58544262)(227.54841,201.07024262)
\curveto(227.66641,216.48298262)(215.62191,223.94266262)(210.53375,225.76186262)
\closepath
}
}
{
\newrgbcolor{curcolor}{0 0 0}
\pscustom[linewidth=0.62353176,linecolor=curcolor]
{
\newpath
\moveto(210.53375,225.76186262)
\lineto(200.38179,201.21024262)
\curveto(200.38179,201.21024262)(227.91513,200.58544262)(227.54841,201.07024262)
\curveto(227.66641,216.48298262)(215.62191,223.94266262)(210.53375,225.76186262)
\closepath
}
}
{
\newrgbcolor{curcolor}{0 0 0}
\pscustom[linewidth=1.60000002,linecolor=curcolor]
{
\newpath
\moveto(290.91011,92.29898262)
\lineto(200.29555,201.22624262)
\lineto(256.13855,338.90570262)
}
}
{
\newrgbcolor{curcolor}{0 0 0}
\pscustom[linewidth=1.60000002,linecolor=curcolor]
{
\newpath
\moveto(200.58607,200.69784262)
\lineto(340.49201,200.19304262)
}
}
{
\newrgbcolor{curcolor}{0 0 0}
\pscustom[linewidth=0.40000001,linecolor=curcolor]
{
\newpath
\moveto(192.18161,300.33430262)
\lineto(208.61511,300.28230262)
}
}
{
\newrgbcolor{curcolor}{0 0 0}
\pscustom[linewidth=0.40000001,linecolor=curcolor]
{
\newpath
\moveto(300.42425,192.45904262)
\lineto(300.42425,208.70942262)
}
\rput[r](173.625,266.125){$||\v_1||^2$}
\rput[b](215.89044416,355.75000499){$||\v_2||^2$}
\rput(223.25,189.5625){$\alpha$}
\rput(220,229.5){$-2\alpha$}
\rput(213.5,209.0625){$\varphi$}
\rput[rb](170.25,345.3125){$e^{-2\alpha\e_{12}}||\v_1||^2$}
\rput[lb](236.875,345.375){$e^{-2\alpha\e_{12}}||\v_1||^2+||\v_2||^2$}
\rput[r](198.625,387.125){$\langle\cdot\rangle_2$}
\rput[t](384.625,196.125){$\langle\cdot\rangle_0$}
\rput[l](213.5,300.25){$\e_{12}$}
\rput[t](302,185){$1$}
}
\end{pspicture}
\caption{Lemma \ref{l:phi} visualized like the complex plane. Vertical axis: bivector part, horizontal axis: scalarpart.}
\end{figure}
\begin{proof}
The argument satisfies
\begin{equation}
\begin{aligned}
\varphi=&\arg(\int_{[-l,l]^2}\operatorname{R} _{\alpha}(\v(\operatorname{R} _{-\alpha}(\x)))\v(\x)\d^2 x)
\\=&\arg(e^{-2\alpha \e_{12}}||\v_1(\x)||^2+||\v_2(\x)||^2)
\\=&\operatorname{atan2}(-\sin(2\alpha)||\v_1(\x)||^2,
\\&\cos(2\alpha)||\v_1(\x)||^2+||\v_2(\x)||^2)
\end{aligned} 
\end{equation}
For $||\v_1(\x)||^2=0$ and $||\v_2(\x)||^2=0$ the statement is trivially true, because then $\varphi=0$ or $\varphi=-2\alpha$. So let $||\v_1(\x)||^2,||\v_2(\x)||^2>0$. Now we have to make a case differentiation. 
\begin{enumerate}
 \item The assumptions $\cos(2\alpha)||\v_1(\x)||^2+||\v_2(\x)||^2>0$ and $ -\sin(2\alpha)||\v_1(\x)||^2>0$ lead to
\begin{equation}
\begin{aligned}
\varphi=&\arctan\frac{-\sin(2\alpha)||\v_1(\x)||^2}{\cos(2\alpha)||\v_1(\x)||^2+||\v_2(\x)||^2}
\end{aligned} 
\end{equation}
so $\varphi$ is positive.
If we leave out $||\v_2(\x)||^2$ the denominator gets smaller. If the denominator $\cos(2\alpha)||\v_1(\x)||^2>0$ remeins positive the positive fraction gets larger and we have
\begin{equation}
\begin{aligned}
\varphi\leq&\arctan\frac{-\sin(2\alpha)||\v_1(\x)||^2}{\cos(2\alpha)||\v_1(\x)||^2}
=-2\alpha,
\end{aligned} 
\end{equation}
with positive $-2\alpha$ and therefore negative $\alpha$. If the denominator $\cos(2\alpha)||\v_1(\x)||^2\leq0$ becomes negative we have $-2\alpha\in[\frac{\pi}2,\pi]$, because of $ -\sin(2\alpha)||\v_1(\x)||^2>0$, so $\varphi\in(-\frac{\pi}2,\frac{\pi}2)\leq-2\alpha$.
 \item The assumptions $\cos(2\alpha)||\v_1(\x)||^2+||\v_2(\x)||^2>0$ and $-\sin(2\alpha)||\v_1(\x)||^2<0$ lead to 
\begin{equation}
\begin{aligned}
\varphi=&\arctan\frac{-\sin(2\alpha)||\v_1(\x)||^2}{\cos(2\alpha)||\v_1(\x)||^2+||\v_2(\x)||^2}
\end{aligned} 
\end{equation}
so $\varphi$ is negative.
If we leave out $||\v_2(\x)||^2$ the denominator gets smaller. If the denominator $\cos(2\alpha)||\v_1(\x)||^2>0$ remeins positive the negative fraction gets smaller and we have
\begin{equation}
\begin{aligned}
\varphi\geq&\arctan\frac{-\sin(2\alpha)||\v_1(\x)||^2}{\cos(2\alpha)||\v_1(\x)||^2}
=-2\alpha,
\end{aligned} 
\end{equation}
with negative $-2\alpha$ and therefore positive $\alpha$. If the denominator $\cos(2\alpha)||\v_1(\x)||^2\leq0$ becomes negative we have $-2\alpha\in[-\pi,-\frac{\pi}2]$, because of $ -\sin(2\alpha)||\v_1(\x)||^2<0$, so $\varphi\in(-\frac{\pi}2,\frac{\pi}2)\geq-2\alpha$.
 \item The assumptions $\cos(2\alpha)||\v_1(\x)||^2+||\v_2(\x)||^2<0$ and $-\sin(2\alpha)||\v_1(\x)||^2>0$ lead to 
\begin{equation}
\begin{aligned}
\varphi=&\arctan\frac{-\sin(2\alpha)||\v_1(\x)||^2}{\cos(2\alpha)||\v_1(\x)||^2+||\v_2(\x)||^2}+\pi
\end{aligned} 
\end{equation}
so $\varphi$ is positive.
If we leave out $||\v_2(\x)||^2$ the magnitude of the denominator gets larger so the magnitude of the fraction gets smaller. Since the fraction is negative and the arctangent is monotonic increasing a lower magnitude increases the whole right side and we have
\begin{equation}
\begin{aligned}
\varphi\leq&\arctan\frac{-\sin(2\alpha)}{\cos(2\alpha)}+\pi=-2\alpha.
\end{aligned} 
\end{equation}
Because the numerator is positive and the denominator is negative this equals $-2\alpha$, which is positive and therefore $\alpha$ is negative.
 \item The assumptions $\cos(2\alpha)||\v_1(\x)||^2+||\v_2(\x)||^2<0$ and $-\sin(2\alpha)||\v_1(\x)||^2<0$ lead to 
\begin{equation}
\begin{aligned}
\varphi=&\arctan\frac{-\sin(2\alpha)||\v_1(\x)||^2}{\cos(2\alpha)||\v_1(\x)||^2+||\v_2(\x)||^2}-\pi
\end{aligned} 
\end{equation}
so $\varphi$ is negative.
If we leave out $||\v_2(\x)||^2$ the magnitude of the denominator gets larger so the magnitude of the fraction decreases. It is positive so the fraction gets smaller, so does the arctangent and the whole right side and we have
\begin{equation}
\begin{aligned}
\varphi\geq&\arctan\frac{-\sin(2\alpha)}{\cos(2\alpha)}
-\pi=-2\alpha.
\end{aligned} 
\end{equation}
Because the numerator and the denominator are negative this equals $-2\alpha$, which is negative and therefore $\alpha$ is positive.
\end{enumerate}
Since we covered all possible configurations, we see that $\alpha$ and $\varphi$ always have different signs. The right estimation for positive $\alpha$ is a result of the even cases and for negative $\alpha$ of the odd ones.
\end{proof}
\begin{thm}\label{t:conv}
 Let the two-dimensional vector field $\v(\x)$ be linear within and zero outside of an area $A$ symmetric with respect to both coordinate axes and $\varphi:(-\frac\pi2,\frac\pi2)\to(-\frac\pi2,\frac\pi2)$ be the function defined by the rule 
\begin{equation}
\begin{aligned}
\varphi(\alpha)=\arg((\operatorname{R} _{\alpha}(\v(\operatorname{R} _{-\alpha}))\star \v)(0).
\end{aligned} 
\end{equation}
Then the series $\alpha_0=\alpha,\alpha_{n+1}=\alpha_{n}+\varphi(\alpha_{n})$ converges to zero for all $\alpha\in(-\frac\pi2,\frac\pi2)$, if $||\v_1(\x)||^2\neq0\neq||\v_2(\x)||^2$.
\end{thm}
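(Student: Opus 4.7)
My plan is to strengthen Lemma \ref{l:phi} to a strict inequality using $\|\v_2(\x)\|_{L^2(A)}^2\neq 0$, deduce that the iteration map is a strict contraction in absolute value, and then close the argument by a compactness/continuity argument on the orbit. The main obstacle will be that the pointwise contraction ratio is not uniformly bounded away from $1$, so Banach's fixed point theorem cannot be invoked directly.

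\textbf{Strict contraction.} By Theorem \ref{t:cor},
\begin{equation*}
(\u\star\v)(0)=e^{-2\alpha \e_{12}}\|\v_1(\x)\|_{L^2(A)}^2+\|\v_2(\x)\|_{L^2(A)}^2,
\end{equation*}
and the isomorphism between the rotor subalgebra of $\clifford{2,0}$ and $\C$ (via $\e_{12}\leftrightarrow i$) identifies this with $z(\alpha):=\|\v_1\|^2 e^{-2\alpha i}+\|\v_2\|^2$. The point $\|\v_1\|^2 e^{-2\alpha i}$ has argument exactly $-2\alpha$ and positive modulus (using $\|\v_1\|^2\neq 0$); adding the strictly positive real $\|\v_2\|^2$ shifts the sum horizontally to the right, which strictly decreases the magnitude of its argument. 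This refines Lemma \ref{l:phi} to the strict statement
\begin{equation*}
\varphi(\alpha)\in(-2\alpha,0)\ \text{ if }\alpha>0,\qquad \varphi(\alpha)\in(0,-2\alpha)\ \text{ if }\alpha<0.
\end{equation*}
Setting $f(\alpha):=\alpha+\varphi(\alpha)$, this immediately gives $f(\alpha)\in(-|\alpha|,|\alpha|)$, hence $|f(\alpha)|<|\alpha|$ for every $\alpha\neq 0$. In particular the iteration $\alpha_{n+1}=f(\alpha_n)$ never leaves $(-\tfrac\pi2,\tfrac\pi2)$, and $|\alpha_n|$ is strictly decreasing while nonzero.

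\textbf{Passage to the limit.} Since $|\alpha_n|$ is monotonically non-increasing and bounded below by $0$, it converges to some $L\in[0,|\alpha_0|]$. I claim $L=0$. Suppose otherwise. On $(-\tfrac\pi2,\tfrac\pi2)$, $z(\alpha)$ avoids the origin (since $z(0)=\|\v_1\|^2+\|\v_2\|^2>0$) and avoids the negative real axis (its imaginary part $-\|\v_1\|^2\sin(2\alpha)$ is zero on this interval only at $\alpha=0$, where $z(0)$ is positive), so $\varphi$ and hence $f$ is continuous on the whole open interval. The orbit lies in the compact set $K:=\{\alpha:L\leq|\alpha|\leq|\alpha_0|\}$. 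By Bolzano--Weierstrass some subsequence converges, $\alpha_{n_k}\to\beta\in K$ with $|\beta|=L>0$. Continuity of $f$ gives $\alpha_{n_k+1}=f(\alpha_{n_k})\to f(\beta)$, while $|\alpha_{n}|\to L$ forces $|f(\beta)|=L=|\beta|$, contradicting the strict contraction $|f(\beta)|<|\beta|$ from the previous step. Hence $L=0$ and $\alpha_n\to 0$.

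The hypotheses play complementary roles that are transparent in this approach: $\|\v_2\|_{L^2(A)}^2\neq 0$ is exactly what converts Lemma \ref{l:phi}'s weak estimate into the strict contraction (without it, $\varphi(\alpha)=-2\alpha$ and the iteration oscillates between $\alpha$ and $-\alpha$), whereas $\|\v_1\|_{L^2(A)}^2\neq 0$ ensures $\varphi\not\equiv 0$ so that the iteration is non-trivial (otherwise the totally rotated copy coincides with $\v$ and no misalignment is detectable). The compactness step is essential because the contraction ratio $|f(\alpha)|/|\alpha|$ degenerates to $1$ as $\|\v_1\|^2/\|\v_2\|^2\to\infty$; localizing the orbit away from zero and invoking continuity on the compact set $K$ is the cleanest way to extract the needed uniformity.
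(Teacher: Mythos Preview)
Your proof is correct and follows the same overall arc as the paper's---show that $|\alpha_n|$ is monotonically non-increasing, hence convergent, and then argue that the limit must be $0$---but the two arguments diverge in how they execute each step. You first sharpen Lemma~\ref{l:phi} to a \emph{strict} inequality (using both $\|\v_1\|^2\neq 0$ and $\|\v_2\|^2\neq 0$) and then close with a Bolzano--Weierstrass/continuity argument on a subsequence; the paper instead keeps the weak inequality, passes to the limit directly, and only at the end performs a case analysis on the explicit $\operatorname{atan2}$ formula to rule out $\varphi(a)=0$ and $\varphi(a)=-2a$. Your route is slightly more robust: it never needs the explicit trigonometric expression for $\varphi$ beyond Theorem~\ref{t:cor}, and your subsequence extraction cleanly sidesteps the issue that only $|\alpha_n|$, not $\alpha_n$ itself, is a~priori known to converge---a point the paper's ``swap the limit and the functions'' step glosses over. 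The paper's approach, on the other hand, makes the dependence on the two hypotheses very explicit by tying each one to a concrete degenerate case of the limiting equation $a=|a+\varphi(a)|$.
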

\begin{proof}
Lemma \ref{l:phi} shows that the series $\alpha_0=\alpha,\alpha_{n+1}=\alpha_{n}+\varphi(\alpha_{n})$ decreases with respect to its magnitude, because for $\alpha_n\in(-\frac\pi2,0)$ we have $0\leq\varphi(\alpha_n)\leq-2\alpha_n$ and therefore
\begin{equation}
\begin{aligned}
\alpha_{n}
=\alpha_{n}+0
&\leq\alpha_{n}+\varphi(\alpha_{n})
=\alpha_{n+1},
\\\alpha_{n+1}
=\alpha_{n}+\varphi(\alpha_{n})
&\leq\alpha_{n}-2\alpha_{n}
=-\alpha_{n}
\end{aligned} 
\end{equation}
 and for $\alpha_n\in(0,\frac\pi2)$ we have $0\geq\varphi(\alpha_n)\geq-2\alpha_n$ and therefore
\begin{equation}
\begin{aligned}
\alpha_{n}
=\alpha_{n}+0
&\geq\alpha_{n}+\varphi(\alpha_{n})
=\alpha_{n+1},
\\\alpha_{n+1}
=\alpha_{n}+\varphi(\alpha_{n})
&\geq\alpha_{n}-2\alpha_{n}
=-\alpha_{n}.
\end{aligned} 
\end{equation}
Since the series of magnitudes is monotonically decreasing and bounded from below by zero it is convergent. 
\par
Let the limit of the sequence of magnitudes be $a=\lim_{n\to\infty}|\alpha_n|$ then using the definition of the series and applying the limit leads to
\begin{equation}
\begin{aligned}
\lim_{n\to\infty}(|\alpha_{n+1}|)=\lim_{n\to\infty}(|\alpha_{n}+\varphi(\alpha_{n})|).
\end{aligned} 
\end{equation}
The modulus function and $\varphi(\alpha_n)$ are continuous in $\alpha_n\in(-\frac\pi2,\frac\pi2)$. That allows us to swap the limit and the functions and write
\begin{equation}
\begin{aligned}
a=&|\lim_{n\to\infty}(\alpha_{n})+\lim_{n\to\infty}(\varphi(\alpha_{n}))|
\\=&|a+\varphi(a)|.
\end{aligned} 
\end{equation}
We apply a case differentiation to the previous equation.
\begin{enumerate}
 \item For $a+\varphi(a)\geq0$ it is equivalent to
\begin{equation}
\begin{aligned}
a=&a+\varphi(a)\Leftrightarrow\varphi(a)=0.
\end{aligned} 
\end{equation}
Since \begin{equation}
\begin{aligned}
\varphi(\alpha)=&\operatorname{atan2}(-\sin(2\alpha)||\v_1(\x)||^2,
\\&\cos(2\alpha)||\v_1(\x)||^2+||\v_2(\x)||^2)
\end{aligned} 
\end{equation}
the claim $\varphi(a)=0$ is true for $\cos(2a)||\v_1(\x)||^2+||\v_2(\x)||^2>0,-\sin(2a)||\v_1(\x)||^2=0$ which is fulfilled either for $||\v_1(\x)||^2=0$ and arbitrary $a$ or for $||\v_1(\x)||^2>0$ and $a=0$.
\item $a+\varphi(a)<0$ leads to
\begin{equation}
\begin{aligned}
a=&-a-\varphi(a)\Leftrightarrow\varphi(a)=-2a,
\end{aligned} 
\end{equation}
which is only fulfilled for $||\v_2(\x)||^2=0$ and arbitrary $a$.
\end{enumerate}
Combination of the two cases leads to the proposition $a=0$ if $||\v_1(\x)||^2\neq0\neq||\v_2(\x)||^2$. Since the sequence of the magnitudes converges to zero the sequence itself converges to zero as well.
\end{proof}
%
\section{Algorithm and Experiments}
The claim $||\v_1(\x)||^2=0$ means $\v_1(\x)=0$ almost everywhere. For a linear vector field this is equivalent to $\v_1(\x)=0$, analogously $||\v_2(\x)||^2=0\Leftrightarrow \v_2(\x)=0$. In the case $\v_1(\x)=0$ Lemma \ref{l:v1,v2} shows that $\forall \alpha\in(-\frac\pi2,\frac\pi2):\varphi(\alpha)=0$. An iterative algorithm would stop after one step and return the correct result, because these vector fields are rotational invariant anyway. 
\par
In the case $\v_2(\x)=0$ Lemma \ref{l:v1,v2} shows that $\forall \alpha\in(-\frac\pi2,\frac\pi2):\varphi(\alpha)=-2\alpha$. The algorithm would alternate between $-2\alpha$ and zero. That means if the algorithm takes the value zero in the $\alpha$ variable after its first iteration the underlying vector field must be a saddle $\v(\x)=\v_1(\x)$ and the correct misalignment is half the calculated $\varphi$. This exception is handled in Line 11 in Algorithm \ref{alg2}.
\par
\begin{algorithm}
\caption{Detection of total misalignment of vector fields}
\label{alg2}
\begin{algorithmic}[1]
\REQUIRE vector field: $\v(\x)$, rotated pattern: $\u(\x)$, desired accuracy: $\varepsilon>0$,
 \STATE $\varphi=\pi,\alpha=0,iter=0,exception=false$,
\WHILE{$\varphi>\varepsilon$}
  \STATE $iter++$,
  \STATE $Cor=(\u(\x)\star \v(\x))(0)$,
  \STATE $\varphi=\arg(Cor)$,
  \STATE $\alpha=\alpha+\varphi$,
  \IF {$iter=1$ and $\alpha=0$}
    \STATE $\alpha=\varphi=\pi/4$,
    \STATE $exception=true$,
  \ENDIF
  \IF {$iter=2$ and not $exception$ and $\alpha=0$}    
    \STATE $\alpha=\varphi=\varphi/2,$
  \ENDIF
  \IF {$iter=2$ and $exception$ and $\varphi=-\pi/2$} 
    \STATE $\alpha=\pi/2,\varphi=\pi/4$
  \ENDIF
  \STATE $\u(\x)=e^{-\varphi \e_{12}}\u(e^{\varphi \e_{12}}\x)$,
\ENDWHILE
\ENSURE misalignment: $\alpha$, corrected pattern: $\u(\x)$, iterations needed: $iter$.
\end{algorithmic}
\end{algorithm} 
\begin{table}[hbt!]
\begin{tabular}{|l|r|r|r|r|r|}
\hline
determined accuracy $eps$	&0.1	& 0.01	& 0.001	& 0.0001	& 0.00001	 \rule [-1.2mm]{0mm}{5mm}\\\hline
average error			&0.098	& 0.015	& 0.002	& 0.0002	& 0.00002	\rule [-1.2mm]{0mm}{5mm}\\\hline
maximal error			&1.927	& 0.903	& 0.312	& 0.089		& 0.028		\rule [-1.2mm]{0mm}{5mm}\\\hline
average number of iterations	&4.07	& 16.95	& 45.72	& 91.69		& 129.05		\rule [-1.2mm]{0mm}{5mm}\\\hline
\end{tabular}
\caption{
Results of Algorithm \ref{alg2} depending on the required accuracy.}
\end{table}
%
In the case of $\alpha=\pm \frac\pi2$ the correlation will be real valued, compare Theorem \ref{t:cor}. 
This case can only appear in the first step of the algorithm. It would return the angle zero like in the case where in deed no rotation is necessary. Therefore we need to include another exception handling. We suggest to apply a total rotation by $\frac\pi4$ to the pattern, if the first step returns $\alpha=0$, compare Line 7 in Algorithm \ref{alg2}. The disadvantage of this treatment is that it might disturb the alignment in the nice case, when vector field and pattern incidentally match at the beginning, but will guarantee the convergence.
\par
The last exception to be treated appears when both $\alpha\in\{-\frac\pi2,0,\frac\pi2\}$ and $\v(\x)=\v_1(\x)$. In this case $\alpha$ gets the value $\frac\pi4$ from the first exception handling and will alternate between $\pm\frac\pi4$ for the rest of the algorithm. We fixed this problem in Line 14 in Algorithm \ref{alg2}.
\par
Together with Remark \ref{r:pi/2} this leads to the Corollary.
\begin{cor}
Algorithm \ref{alg2} returns the correct rotational misalignment for any two-dimensional linear vector field and its totally rotated copy by arbitrary angle.
\end{cor}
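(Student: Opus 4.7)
The plan is to verify the Corollary by a case analysis that mirrors the branches of Algorithm \ref{alg2}. By Remark \ref{r:pi/2} it suffices to consider a true misalignment $\alpha\in(-\tfrac\pi2,\tfrac\pi2]$, and by Lemma \ref{l:vf_split}/Lemma \ref{l:v1,v2} every two-dimensional linear field decomposes as $\v=\v_1+\v_2$. The natural stratification is then $(\v_1=0$ or not$)\times(\v_2=0$ or not$)\times(\alpha$ interior or boundary$)$.

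In the generic interior case ($\v_1,\v_2\neq0$ and $|\alpha|<\tfrac\pi2$) the body of the while-loop reproduces exactly the recursion $\alpha_{n+1}=\alpha_n+\varphi(\alpha_n)$ of Theorem \ref{t:conv}: after the total rotation of the pattern $\u$ in Line~14, the residual misalignment becomes $\alpha_n+\varphi(\alpha_n)$, and the next correlation measures $\varphi$ of this new residual. Theorem \ref{t:conv} immediately yields $|\varphi(\alpha_n)|\to0$, so the loop terminates below any prescribed $\varepsilon$ with the accumulated variable equal (up to sign convention) to $\alpha$. The degenerate case $\v_1=0$ is easier: by Lemma \ref{l:v1,v2} the field is rotationally invariant, so every output is a valid misalignment; the correlation is real for all $\alpha$, the first iteration triggers the Line~7 exception, the $\pi/4$ test rotation still leaves $\v$ invariant, and the second correlation returns $\varphi=0$ so the loop exits correctly.

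The remaining degenerate case $\v_2=0$ uses Theorem \ref{t:cor} to obtain $\varphi(\alpha)=-2\alpha$ exactly. One iteration sends the residual $\alpha\mapsto -\alpha$; a second iteration sends it back to $\alpha$, so the algorithm's accumulated variable equals $0$ after two steps. The test ``$iter=2$ and not \emph{exception} and $\alpha=0$'' on Line~10 diagnoses precisely this oscillation, and the assignment $\alpha\leftarrow\varphi/2$ restores the correct angle. For the boundary $\alpha=\pm\tfrac\pi2$ with $\v_1,\v_2\neq0$, Theorem \ref{t:cor} gives $e^{-2\alpha\e_{12}}=-1$, so the correlation is real and $\varphi=0$ (mod $\pi$) at the first step. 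Line~7 again applies a $\pi/4$ probe rotation, shifting the residual into the interior $(-\tfrac\pi2,\tfrac\pi2)\setminus\{0\}$, from which Theorem \ref{t:conv} drives the sequence to zero. The Line~14 branch serves precisely the remaining corner where $\v_2=0$ and $\alpha=\pm\tfrac\pi2$ collide, where the $\pi/4$ probe would otherwise induce an alternation $\pm\pi/4$; the $\pi/2$ correction closes that loop.

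The routine algebraic identities reduce to citing Lemma \ref{l:v1,v2}, Theorem \ref{t:cor} and Theorem \ref{t:conv}, so the substantive obstacle is bookkeeping: one must verify that the three exception branches in Algorithm \ref{alg2} are triggered on mutually disjoint configurations, that their triggering conditions are both necessary and sufficient in each degenerate subcase, and in particular that the double-degenerate subcase ($\v_2=0$, $\alpha=\pm\tfrac\pi2$) is resolved correctly by the cascade of Lines~7--15 rather than by any single branch alone. I expect this combinatorial verification of the exception logic, not the underlying convergence analysis, to be the main difficulty of the proof.
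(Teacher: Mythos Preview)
Your proposal is correct and follows essentially the same approach as the paper: the paper's justification (given in the discussion preceding the Corollary rather than as a formal proof) is precisely the same case split over $\v_1=0$, $\v_2=0$, the boundary $\alpha\in\{-\tfrac\pi2,0,\tfrac\pi2\}$, and their overlap, invoking Theorem~\ref{t:conv} for the generic case and matching each degenerate case to its exception branch, together with Remark~\ref{r:pi/2} to reduce arbitrary angles to $(-\tfrac\pi2,\tfrac\pi2]$. One cosmetic slip: the rotation of $\u$ that implements the recursion is on Line~17 of Algorithm~\ref{alg2}, not Line~14 (which is the third exception test).
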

We practically tested Algorithm \ref{alg2} applying it to continuous, linear vector fields $\R^2\to\clifford{2,0}$, that vanish outside the unit square. The angle $\alpha\in(-\pi,\pi]$ and the four coefficients with magnitude not bigger than one describing the vector fields were determined randomly. The results for one million applications can be found in Table 1. The error was measured from the square root of the sum of the squared differences of the determined and the given coefficients. The experiments showed that 
high numbers of necessary iterations occur when the magnitudes of $\v_1$ and $\v_2$ differ gravely. Table 1 also implies that the error decreases linearly with the required accuracy while the number of iterations increases sublinearly. But most importantly we could see Algorithm \ref{alg2} converges in all cases, just as the theory suggested. 
\section{Conclusions and Outlook}
The geometric cross correlation of two vector fields is scalar and bivector valued. We proved in Lemma \ref{l:phi} that for all linear two-dimensional vector fields this rotor has an argument with opposite sign and magnitude less or equal to twice the angle of the misalignment. Therefore application of the encoded rotation to the outer rotated copy of the vector field does not increase the misalignment to its original. In Theorem \ref{t:conv} we showed that iterative application completely erases the misalignment of the rotationally misaligned vector fields, if $||\v_1(\x)||\neq0\neq||\v_2(\x)||$. These exceptions could also be treated in Algorithm \ref{alg2}. We implemented it and experimentally confirmed the theoretic results.
\par
Currently we analyze the application of this approach to total rotations of three-dimensional vector fields. For praxis it will be interesting how the algorithm is able to treat vector fields, that are discrete, disturbed, or also dissimilar with respect to translation. The algorithm converges quite fast, if $||\v_1(\x)||$ and  $||\v_2(\x)||$ do not differ from each other too much. 
In case one of them dominates it leads to oscillation or deceleration. We think about developing an adaptive algorithm, that estimates the ratio of $||\v_1(\x)||$ and $||\v_2(\x)||$ and weights the result respectively. Another way of improving the speed of the algorihtm is to use a fast Fourier transform and a geometric convolution theorem \cite{BSH12c}. We plan on using the algorithm for vector field registration of real world data and compare it to established algorithms with respect to reliability and runtime.



\IfFileExists{\jobname.bbl}{}
 {\typeout{}
  \typeout{******************************************}
  \typeout{** Please run "bibtex \jobname" to optain}
  \typeout{** the bibliography and then re-run LaTeX}
  \typeout{** twice to fix the references!}
  \typeout{******************************************}
  \typeout{}
 }

  \bibliographystyle{unsrt} 
  \bibliography{./../Literaturverzeichnis}
\end{document}